\documentclass[10pt,twoside]{article}
\usepackage{mathrsfs}
\usepackage{amsmath}
\usepackage{amssymb}
\usepackage{fancyhdr}
\usepackage{latexsym}
\usepackage{bbding}
\usepackage{mathrsfs}
\usepackage{wasysym}

\usepackage{multicol,graphics}

\setcounter{MaxMatrixCols}{10}
\newtheorem{theorem}{Theorem}[section]
\newtheorem{lemma}[theorem]{Lemma}

\numberwithin{equation}{section}
\newenvironment{proof}[1][Proof]{\noindent\textbf{#1.} }{\hfill $\Box$}
\allowdisplaybreaks

 \makeatletter\setlength{\textwidth}{15.0cm}
  \setlength{\oddsidemargin}{1.0cm}
\setlength{\evensidemargin}{1.0cm} \setlength{\textheight}{21.0cm}

\begin{document}
\title{{Logarithmically Improved Blow-up Criteria for a Phase Field Navier-Stokes Vesicle-Fluid Interaction Model}\footnote{This work
is partially supported by the National Natural Science Foundation of
China (11171357).}}
\author{Jihong Zhao$^{\text{a}}$\footnote{Email addresses: zhaojihong2007@yahoo.com.cn (J. Zhao); liuqao2005@163.com (Q. Liu).}\ \ and \ Qiao Liu$^{\text{b}}$\\
[0.2cm] {\small $^{\text{a}}$  College of Science, Northwest A\&F
University, Yangling,}\\
{\small   Shaanxi 712100, PR China}\\
[0.2cm] {\small $^{\text{b}}$ Department of Mathematics, Hunan Normal University, Changsha,}\\
{\small Hunan 410081, PR China}}
\date{}
\maketitle

\begin{abstract}
In this paper, we study a hydrodynamical system modeling the
deformation of vesicle membrane under external incompressible
viscous flow fields. The system is in the Eulerian formulation and
is governed by the coupling of the incompressible Navier-Stokes
equations with a phase field equation. In the three dimensional
case, we establish two logarithmically improved blow-up criteria for
local smooth solutions of this system in terms of the vorticity
field only in the homogeneous Besov spaces.

\textbf{Keywords}: Phase field; Navier-Stokes equations; fluid
vesicle interaction; regularity criterion; Besov space

\textbf{2010 AMS Subject Classification}: 35B44, 35Q30, 76D09, 76T10
\end{abstract}

\section{Introduction}

Recently, there have been many numerical and theoretical studies on
the configurations and deformations of elastic vesicle membranes
under external flow fields
\cite{DBV97,DLL07,DLRW09,DLW04,DLW05,DLW06, LAV02,LTT11,WD08,WX12}.
The single component vesicle membranes are possibly the simplest
models for the biological cells and molecules and have widely
studied in biology, biophysics and bioengineering. Such vesicle
membranes can be formed by certain amphiphilic molecules assembled
in water to build bilayers, and exhibit a rich set of geometric
structures in various mechanical, physical and biological
environment \cite{DLW04}. In order to model and understand the
formation and dynamics of vesicle membranes and the fluid structure
interaction, one approach is to consider equations of elasticity for
the vesicle membranes and the Navier-Stokes equations for the fluid.
However, the model established in this approach is very difficult to
study and numerically simulate due to the fact that the description
for elasticity is in Lagrangian coordinate (Hooke's law) and for
fluids is in Eulerian coordinate. To overcome this difficulty, in
\cite{DLL07,DLW04}, the authors established a phase field
Navier-Stokes vesicle fluid interaction model for the vesicle shape
dynamics in flow fields via the phase field approach. In this model,
the vesicle membrane $\Gamma$ is described by a phase function
$\phi$, which is a labeling function defined on computational domain
$Q$. The function $\phi$ takes value $+1$ inside of the vesicle
membrane and $-1$ outside, with a thin transition layer of width
characterized by a small (compared to the vesicle size) positive
parameter $\varepsilon$. Obviously, the sharp transition layer of
the phase function gives a diffusive interface description of the
vesicle membrane $\Gamma$, which is recovered by the zero level set
$\{x: \phi(x)=0\}$. The advantage of introducing such a phase
function $\phi$ is to formulate the original Lagrangian description
of the membrane evolution in the Eulerian coordinates. On the other
hand, the viscous fluid is modeled by the incompressible
Navier-Stokes equations with unit density and with an external force
defined in terms of $\phi$.

\medskip

In this paper, we study the three dimensional phase field
Navier-Stokes vesicle-fluid interaction model subjecting to the
periodic boundary conditions (i.e., in torus $\mathbb{T}^{3}$),
which reads as follows:
\begin{align}\label{eq1.1}
 &{\partial_{t}}u+u\cdot\nabla u+\nabla{P}=\mu\Delta u+\frac{\delta E(\phi)}{\delta\phi}\nabla\phi\ \ &\text{in}\ \ Q\times[0,T],\\
\label{eq1.2}
 &\nabla\cdot u=0\ \ &\text{in}\ \ Q\times[0,T],\\
\label{eq1.3}
 &\partial_{t}\phi+u\cdot\nabla\phi=-\gamma\frac{\delta
 E(\phi)}{\delta\phi}\ \ &\text{in}\ \ Q\times[0,T]\;
\end{align}
with the initial conditions
\begin{equation}\label{eq1.4}
  u(x,0)=u_{0}(x) \ \text{with}\ \nabla\cdot u_{0}=0,  \ \text{and}\ \ \phi(x,0)=\phi_{0}(x)\ \
  \text{for}\ \
  x\in Q,
\end{equation}
and the boundary conditions
\begin{equation}\label{eq1.5}
  u(x+e_{i},t)=u(x,t), \quad \phi(x+e_{i},t)=\phi(x,t )\ \
  \text{for}\ \
  x\in\partial Q\times[0,T], \ i=1,2,3,
\end{equation}
where the set of vectors
$\{e_{1}=(1,0,0),e_{2}=(0,1,0),e_{3}=(0,0,1)\}$ denotes an
orthonormal basis of $\mathbb{R}^{3}$ and $Q$ is the unit square in
$\mathbb{R}^{3}$. Here $u=(u_{1},u_{2}, u_{3})\in\mathbb{R}^{3}$ and
$P=P(x,t)\in \mathbb{R}$ denote the unknown velocity vector field
and unknown pressure of the fluid, respectively. $\phi\in\mathbb{R}$
is the phase function of the vesicle membrane $\Gamma$. $E(\phi)$
denotes the physical approximation/regularization of the Helfrich
elastic bending energy for the vesicle membrane which is given by
(cf. \cite{DLL07, DLRW09,DLW05,DLW06})
\begin{equation}\label{eq1.6}
  E(\phi)=E_{\varepsilon}(\phi)+\frac{1}{2}M_{1}(A(\phi)-\alpha)^{2}+\frac{1}{2}M_{2}(B(\phi)-\beta)^{2}
\end{equation} with
\begin{equation}\label{eq1.7}
  E_{\varepsilon}(\phi)=\frac{k}{2\varepsilon}\int_{\Omega}|f(\phi)|^{2}dx
  \ \text{and}\
  f(\phi)=-\varepsilon\Delta\phi+\frac{1}{\varepsilon}(\phi^{2}-1)\phi,
\end{equation}
where $\varepsilon$ is a small (compared to the vesicle size)
positive parameter that characterizes the thickness of transition
layer of the phase function, $M_{1}$ and $M_{2}$ are two penalty
constants which are introduced in order to enforce the volume
\begin{equation}\label{eq1.8}
  A(\phi)=\int_{\Omega}\phi \;dx
\end{equation}
and the surface area
\begin{equation}\label{eq1.9}
B(\phi)=\int_{Q}\Big(\frac{\varepsilon}{2}|\nabla\phi|^{2}+\frac{1}{4\varepsilon}(\phi^{2}-1)^{2}\Big)dx
\end{equation}
of the vesicle conserved (in time), and $\alpha=A(\phi_{0})$ and
$\beta=B(\phi_{0})$ are determined by the initial value of the phase
function $\phi_{0}$.  The positive constants $\nu$, $k$, and
$\gamma$ denote, respectively, the viscosity of the fluid, the
bending modulus of the vesicle, and the mobility coefficient.
$\frac{\delta E(\phi)}{\delta\phi}$ is the so-called chemical
potential that denotes the variational derivative of $E(\phi)$ in
the variable $\phi$. Note that, if we denote
\begin{equation}\label{eq1.10}
  g(\phi)=-\Delta f(\phi)+\frac{1}{\varepsilon^{2}}(3\phi^{2}-1)f(\phi),
\end{equation}
then a direct calculation yields that the variation of the
approximate elastic energy is given by (see \cite{DLL07,DLRW09})
\begin{align}\label{eq1.11}
  \frac{\delta
  E(\phi)}{\delta\phi}&=kg(\phi)+M_{1}(A(\phi)-\alpha)+M_{2}(B(\phi)-\beta)f(\phi)\nonumber\\
  &=k\varepsilon\Delta^{2}\phi-\frac{k}{\varepsilon}\Delta(\phi^{3}-\phi)-\frac{k}{\varepsilon}(3\phi^{2}-1)\Delta\phi+
  \frac{k}{\varepsilon^{3}}(3\phi^{2}-1)(\phi^{2}-1)\phi\nonumber\\
  &\ \ \ +M_{1}(A(\phi)-\alpha)+M_{2}(B(\phi)-\beta)f(\phi).
\end{align}

The system \eqref{eq1.1}--\eqref{eq1.3} describes the dynamic
evolution of vesicle membranes immersed in an incompressible,
Newtonian fluid, using an energetic variational approach
\cite{DLL07,DLW04}  (see \cite{DLRW05,DLRW09,DLW06,OH89,WD08} for
numerical simulations and other studies). Equations \eqref{eq1.1}
and \eqref{eq1.2} are the momentum conservation and the mass
conservation equations of a viscous fluid with unit density and with
an external force caused by the phase field $\phi$. Equation
\eqref{eq1.2} is the condition of incompressibility. Equation
\eqref{eq1.3} is a relaxed transport equation of $\phi$ with
advection by the velocity field $u$. The right-hand side of
\eqref{eq1.3} is a regularization term which ensures the consistent
dissipation of energy. Roughly speaking, the system
\eqref{eq1.1}--\eqref{eq1.3} is governed by the coupling of the
hydrodynamic fluid flow and the bending elastic properties of the
vesicle membrane. The resulting membrane configuration and the flow
field reflect the competition and the coupling of the kinetic energy
and membrane elastic energies.

\medskip

For the system \eqref{eq1.1}--\eqref{eq1.3} subjecting to no-slip
boundary condition for the velocity field and Dirichlet boundary
condition for the phase function, Du, Li and Liu in \cite{DLL07}
obtained that there exists global weak solutions via the modified
Galerkin argument, and there holds basic energy inequality
\begin{equation}\label{eq1.12}
  \frac{d}{dt}\Big(\frac{1}{2}\|u(\cdot,t)\|_{L^{2}}^{2}+E(\phi(\cdot,t))\Big)+\mu\|\nabla u(\cdot,t)\|_{L^{2}}^{2}
  +\gamma\|\frac{\delta E(\phi)}{\delta\phi}\|_{L^{2}}^{2}=0,\ \ \forall\
  t>0.
\end{equation}
Moreover, the authors also proved that weak solution is unique under
an additional regularity assumption $u\in L^{8}(0,T; L^{4}(Q))$.
Recently, local in time existence and uniqueness of strong solution
to the system \eqref{eq1.1}--\eqref{eq1.3} have been established in
\cite{LTT11}, and under the assumptions that the initial data and
the quantity $(|\Omega|+\alpha)^{2}$ are sufficiently small, the
authors proved existence of almost global strong solutions. Note
that they have to restrict the working space with proper limited
regularity due to some compatibility conditions at the boundary
which is required in the fixed point strategy. Very recently, Wu and
Xu \cite{WX12} considered the system \eqref{eq1.1}--\eqref{eq1.3}
with initial conditions \eqref{eq1.4} and periodic boundary
conditions \eqref{eq1.5} to avoid troubles caused by the boundary
terms when performing integration by parts. They proved that, for
any given initial data $(u_{0}, \phi_{0})\in H^{1}_{per}(Q)\times
H^{4}_{per}(Q)$, there exists a positive time $T$ such that the
system \eqref{eq1.1}--\eqref{eq1.5} admits a unique smooth solution
$(u, \phi)$ satisfying
\begin{equation}\label{eq1.13}
\begin{cases}
  u\in C([0,T], H^{1}_{per}(Q))\cap L^{2}(0,T; H^{2}_{per}(Q))\cap H^{1}(0,T; L^{2}_{per}(Q)),\\
  \phi \in C([0,T], H^{4}_{per}(Q))\cap L^{2}(0,T; H^{6}_{per}(Q))\cap
  H^{1}(0,T; H^{2}_{per}(Q)).
\end{cases}
\end{equation}
Moreover, if the viscosity $\mu$ is assumed to be properly large,
then the system \eqref{eq1.1}--\eqref{eq1.5} admits a unique global
strong solution that is uniformly bounded in $H^{1}_{per}\times
H^{4}_{per}$ on $[0,\infty)$. However, as for the well-known
Navier-Stokes equations, an outstanding open problem is whether or
not smooth solution of \eqref{eq1.1}--\eqref{eq1.5} on $[0,T)$ will
lead to a singularity at the time $t=T$.

\medskip

For the Navier-Stokes equations, some results were obtained in early
by Prodi \cite{P59}, Serrin \cite{S62} and Giga \cite{G86}, they
proved that if
\begin{equation}\label{eq1.14}
  \int_{0}^{T}\|u(\cdot, t)\|_{L^{p}}^{q}\;dt<\infty\
  \ \  \text{with}\ \
  \frac{3}{p}+\frac{2}{q}=1,\ 3< p\leq \infty,
\end{equation}
then the smooth solution $u$ can be extended past the time $T$,
while the limit case $p=3$ was proved by Escauriaza et al.
\cite{ESS03}. In 1995, Beir\~{a}o da Veiga \cite{B95} established
similar criterion for the derivative of the solution, i.e.,
\eqref{eq1.14} can be replaced by the following condition:
\begin{equation}\label{eq1.15}
  \int_{0}^{T}\|\nabla u(\cdot, t)\|_{L^{p}}^{q}\;dt<\infty\
  \ \  \text{with}\ \
  \frac{3}{p}+\frac{2}{q}=2,\ 3< p\leq \infty.
\end{equation}
In 1984, Beale, Kato and Majda in their pioneer work \cite{BKM84}
showed that if the smooth solution $u$ blows up at the time $t=T$,
then
\begin{equation}\label{eq1.16}
  \int_{0}^{T}\|\omega(\cdot,
  t)\|_{L^{\infty}}\;dt=\infty,
\end{equation}
where $\omega=\nabla\times u$ is the vorticity of the velocity
field. Later, Kozono and Taniuchi \cite{KT00} and Konozo, Ogawa and
Taniuchi \cite{KOT02} refined the criterion \eqref{eq1.16} to
\begin{equation}\label{eq1.17}
  \int_{0}^{T}\|\omega(\cdot, t)\|_{BMO}\;dt=\infty\
  \ \text{and}\ \ \int_{0}^{T}\|\omega(\cdot, t)\|_{\dot{B}^{0}_{\infty,
  \infty}}\;dt=\infty,
\end{equation}
respectively, where $BMO$ is the space of  \textit{Bounded Mean
Oscillation} and $\dot{B}^{0}_{\infty, \infty}$ is the homogeneous
Besov spaces.  Recently, Fan et al. \cite{FJNZ11} and Guo and Gala
\cite{GG11} improved the above criteria to the following two
logarithmic type criteria:
\begin{equation}\label{eq1.18}
  \int_{0}^{T}\frac{\|w(\cdot,t)\|_{\dot{B}^{0}_{\infty,\infty}}}
  {\sqrt{1+\ln(e+\|w(\cdot,t)\|_{\dot{B}^{0}_{\infty,\infty}})}}\;dt=\infty
\end{equation}
and
\begin{equation}\label{eq1.19}
  \int_{0}^{T}\frac{\|w(\cdot,t)\|^{2}_{\dot{B}^{-1}_{\infty,\infty}}}
  {1+\ln(e+\|w(\cdot,t)\|_{\dot{B}^{-1}_{\infty,\infty}})}\;dt=\infty.
\end{equation}
When the phase function $\phi$ is considered, similar regularity
criteria as \eqref{eq1.14} and \eqref{eq1.15} for the system
\eqref{eq1.1}--\eqref{eq1.3} have been established in \cite{WX12}.
The first author of the present paper in \cite{Z12} obtained that
the Beale-Kato-Majda  criterion \eqref{eq1.16} still holds for the
system \eqref{eq1.1}--\eqref{eq1.5}.
\medskip

Due to the lack of global well-poedness theory of the system
\eqref{eq1.1}--\eqref{eq1.5}, the investigations of blow-up criteria
of local smooth solution are very important ways to understand the
properties of solutions. Motivated by the above results, the purpose
of this paper is to establish the blow-up criteria for the system
\eqref{eq1.1}--\eqref{eq1.5} in term of the norm of the homogeneous
Besov space. The main results of this paper are as follows:

\begin{theorem}\label{th1.1}
For $(u_{0}, \phi_{0})\in H^{3}_{per}(Q)\times H^{6}_{per}(Q)$ with
$\nabla\cdot u_{0}=0$. Let $T_{*}$ be the maximal existence time
such that the system \eqref{eq1.1}--\eqref{eq1.5} has a unique
smooth solution  $(u,\phi)$ on $[0,T_{*})$. If $T_{*}<\infty$, then
\begin{align}\label{eq1.20}
\int_{0}^{T_{*}}\frac{\|\omega(\cdot,t)\|_{\dot{B}^{0}_{\infty,\infty}}}{\sqrt{1+\ln(e+\|\omega(\cdot,t)\|_{\dot{B}^{0}_{\infty,\infty}})}}dt=\infty,
\end{align}
where $\omega=\nabla\times u$ is the vorticity field. In particular,
\begin{equation*}
\limsup_{t\nearrow
T_{*}}\|\omega(\cdot,t)\|_{\dot{B}^{0}_{\infty,\infty}}=\infty.
\end{equation*}
\end{theorem}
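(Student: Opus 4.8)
The plan is to argue by contradiction. Assume
\[
\int_0^{T_*}\frac{\|\omega(\cdot,t)\|_{\dot{B}^0_{\infty,\infty}}}{\sqrt{1+\ln(e+\|\omega(\cdot,t)\|_{\dot{B}^0_{\infty,\infty}})}}\,dt<\infty,
\]
and prove that the higher-order energy $Y(t):=\|u(\cdot,t)\|_{H^3}^2+\|\phi(\cdot,t)\|_{H^6}^2$ remains bounded on $[0,T_*)$. Since the local existence theory (of the type recorded in \eqref{eq1.13}, now at the $H^3\times H^6$ level) allows a solution whose $H^3\times H^6$ norm stays finite to be continued past $T_*$, this boundedness contradicts the maximality of $T_*$, establishing \eqref{eq1.20}. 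The final $\limsup$ assertion is then immediate: were $\|\omega\|_{\dot{B}^0_{\infty,\infty}}$ bounded on $[0,T_*)$, the integrand above would be bounded, hence integrable over the finite interval $[0,T_*]$, contradicting \eqref{eq1.20}.

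The heart of the argument is a coupled higher-order a priori estimate. First I would record, from the basic energy law \eqref{eq1.12}, the uniform control of $\|u\|_{L^2}$ and of $E(\phi)$, which bounds the low-order norms of $\phi$. I would then apply $D^3$ to \eqref{eq1.1} and $D^6$ to \eqref{eq1.3}, test with $D^3u$ and $D^6\phi$ respectively, and add. The pressure drops out by \eqref{eq1.2}; the convective terms are treated through the cancellation $\int u\cdot\nabla(D^ku)\cdot D^ku\,dx=0$ together with Kato--Ponce commutator estimates $\|[D^k,u\cdot\nabla]v\|_{L^2}\le C\|\nabla u\|_{L^\infty}\|v\|_{H^k}+C\|\nabla v\|_{L^\infty}\|u\|_{H^k}$; and the viscous dissipation $\mu\|D^4u\|_{L^2}^2$ together with the bi-harmonic dissipation $\gamma k\varepsilon\|D^8\phi\|_{L^2}^2$ are retained to absorb the top-order contributions. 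The quintic and intermediate terms in the chemical potential \eqref{eq1.11} are then handled by Gagliardo--Nirenberg interpolation against these dissipations and the baseline bounds.

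The genuinely delicate point --- and the step I expect to be the main obstacle --- is the coupling between $\frac{\delta E}{\delta\phi}\nabla\phi$ in \eqref{eq1.1} and $u\cdot\nabla\phi$ in \eqref{eq1.3}. Because $\frac{\delta E}{\delta\phi}$ contains $k\varepsilon\Delta^2\phi$, estimating the forcing of \eqref{eq1.1} or the commutator of the transport in \eqref{eq1.3} term by term would cost more derivatives than $H^3\times H^6$ provides. The way out is to integrate by parts so that a derivative is shifted onto the diffusive unknown and absorbed by the corresponding dissipation, and, crucially, to combine the two cross terms so that their top-order parts cancel --- this is the higher-order manifestation of the energy-variational cancellation already visible in \eqref{eq1.12}. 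Making this cancellation explicit (most naturally by testing with the variational quantities $f(\phi)$ and $\frac{\delta E}{\delta\phi}$ rather than with bare powers of $\Delta$) so that the estimate closes at the stated regularity is where the real work lies. The outcome I aim for is the differential inequality
\[
\frac{d}{dt}(e+Y)\le C\bigl(1+\|\nabla u\|_{L^\infty}\bigr)(e+Y).
\]

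It remains to insert the vorticity. Here I would use the logarithmic Sobolev inequality of Kozono--Ogawa--Taniuchi type,
\[
\|\nabla u\|_{L^\infty}\le C\bigl(1+\|\omega\|_{\dot{B}^0_{\infty,\infty}}\ln(e+\|u\|_{H^3})\bigr),
\]
which turns the previous bound into $\frac{d}{dt}\ln(e+Y)\le C(1+\|\omega\|_{\dot{B}^0_{\infty,\infty}}\ln(e+Y))$. To extract the logarithmic improvement I would exploit that $\|\omega\|_{\dot{B}^0_{\infty,\infty}}\le C\|\omega\|_{L^\infty}\le C\|u\|_{H^3}\le C(e+Y)^{1/2}$, so that $\ln(e+\|\omega\|_{\dot{B}^0_{\infty,\infty}})\le\tfrac12\ln(e+Y)+C$; writing
\[
\|\omega\|_{\dot{B}^0_{\infty,\infty}}=\frac{\|\omega\|_{\dot{B}^0_{\infty,\infty}}}{\sqrt{1+\ln(e+\|\omega\|_{\dot{B}^0_{\infty,\infty}})}}\cdot\sqrt{1+\ln(e+\|\omega\|_{\dot{B}^0_{\infty,\infty}})}
\]
and restricting to a short interval $[t_0,T_*)$ on which the tail of the assumed integral is small, a nonlinear (Osgood/Gronwall-type) inequality then yields $\sup_{[0,T_*)}Y<\infty$. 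This last logarithmic Gronwall step is the technical heart of the ``logarithmically improved'' gain, while the coupled higher-order estimate with its derivative bookkeeping is where I anticipate the main difficulty.
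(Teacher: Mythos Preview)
Your overall strategy is sound and, once the details are filled in, gives a valid proof; but it differs from the paper's argument in two substantive ways, and there is one slip you must fix for the final Gronwall step to close.

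\textbf{Two-tier versus single-tier.} You run a single higher-order energy $Y=\|u\|_{H^3}^2+\|\phi\|_{H^6}^2$ and aim for $\frac{d}{dt}(e+Y)\le C(1+\|\nabla u\|_{L^\infty})(e+Y)$, applying the logarithmic Sobolev inequality directly at this top level. The paper instead runs a \emph{two-tier} scheme: a lower-order energy $\|\omega\|_{L^2}^2+\eta\|\frac{\delta E}{\delta\phi}\|_{L^2}^2$ is estimated first, using the logarithmic Sobolev inequality and Gronwall to bound it by a small power $(e+h(t))^{C\sigma}$ of the higher-order quantity $h(t)=\|\nabla\Delta u\|_{L^2}^2+\hat\eta\|\Delta\frac{\delta E}{\delta\phi}\|_{L^2}^2$; then the $h(t)$ estimate has only $L^1_t$ coefficients (from the basic energy law) plus a polynomial term $\|\omega\|_{L^2}^{14}$, which the first tier controls once $\sigma$ is small. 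Your direct BKM-style route is cleaner; the paper's bootstrapping avoids ever needing $\|\nabla u\|_{L^\infty}$ in the high-order estimate, at the cost of the extra layer. Both rely on testing the $\phi$-equation against the variational quantity $\frac{\delta E}{\delta\phi}$ rather than $D^6\phi$ (exactly the device you anticipate), since otherwise the transport commutator would demand more $u$-derivatives than the dissipation supplies.

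\textbf{The preliminary $\|\nabla\phi\|_{L^\infty}$ bound.} The paper first proves, using only the basic energy law, that $\sup_{[0,T]}\|\phi\|_{H^3}<\infty$ and hence $\|\nabla\phi\|_{L^\infty}$ is uniformly bounded. This is used repeatedly to keep the coupling terms linear in $Y$. You do not mention this step; without it (or an equivalent interpolation against the $H^8$ dissipation of $\phi$), products such as $\|\Delta\frac{\delta E}{\delta\phi}\|_{L^2}^2\|\nabla\phi\|_{L^\infty}^2$ become superlinear in $Y$ and your differential inequality does not close.

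\textbf{The $\ln$ versus $\ln^{1/2}$ issue.} Your stated logarithmic inequality has a full $\ln$, giving $\frac{d}{dt}\ln(e+Y)\le C(1+\|\omega\|_{\dot B^0_{\infty,\infty}}\ln(e+Y))$; after your factorization this becomes $z'\le C\,a(t)\,z^{3/2}$ with $z=\ln(e+Y)$, which does \emph{not} close by Gronwall. You need the sharp Kozono--Ogawa--Taniuchi form $\|\omega\|_{L^\infty}\le C(1+\|\omega\|_{\dot B^0_{\infty,\infty}}\ln^{1/2}(e+\|\omega\|_{H^2}))$; with the square root, your factorization yields $z'\le C(1+a(t))\,(1+z)$, and then ordinary Gronwall with $\int_0^{T_*}a<\infty$ finishes the argument.
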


\begin{theorem}\label{th1.2}
For $(u_{0}, \phi_{0})\in H^{3}_{per}(Q)\times H^{6}_{per}(Q)$ with
$\nabla\cdot u_{0}=0$. Let $T_{*}$ be the maximal existence time
such that the system \eqref{eq1.1}--\eqref{eq1.5} has a unique
smooth solution  $(u,\phi)$ on $[0,T_{*})$. If $T_{*}<\infty$, then
\begin{align}\label{eq1.21}
\int_{0}^{T_{*}}\frac{\|\omega(\cdot,t)\|_{\dot{B}^{-1}_{\infty,\infty}}^{2}}{1+\ln(e+\|\omega(\cdot,t)\|_{\dot{B}^{-1}_{\infty,\infty}})}dt=\infty.
\end{align} In particular,
\begin{equation*}
\limsup_{t\nearrow
T_{*}}\|\omega(\cdot,t)\|_{\dot{B}^{-1}_{\infty,\infty}}=\infty.
\end{equation*}
\end{theorem}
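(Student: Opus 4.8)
The plan is to argue by contradiction: suppose $T_*<\infty$ while the integral in \eqref{eq1.21} is finite. By the local existence and continuation theory underlying \eqref{eq1.13} (raised to the regularity $(u_0,\phi_0)\in H^{3}_{per}\times H^{6}_{per}$ of the data), the solution can be extended beyond $T_*$ as soon as $\sup_{[0,T_*)}\big(\|u\|_{H^{3}}^{2}+\|\phi\|_{H^{6}}^{2}\big)<\infty$. Hence it suffices to bound $\mathcal{E}(t):=\|u(\cdot,t)\|_{H^{3}}^{2}+\|\phi(\cdot,t)\|_{H^{6}}^{2}$ on $[0,T_*)$, which contradicts maximality. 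Throughout I would use the basic energy law \eqref{eq1.12}, so that $\|u\|_{L^{2}}$, the lower norms of $\phi$, and the space--time dissipation are already controlled and all dangerous contributions come from top-order derivatives. Write $\Lambda=(-\Delta)^{1/2}$.

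First I would run the top-order energy estimate: apply $\Lambda^{3}$ to \eqref{eq1.1} and $\Lambda^{6}$ to \eqref{eq1.3}, pair in $L^{2}$ with $\Lambda^{3}u$ and $\Lambda^{6}\phi$, add, and use \eqref{eq1.2} to drop the pressure. The viscous term and the biharmonic part $k\varepsilon\Delta^{2}\phi$ of $\delta E/\delta\phi$ then furnish the good dissipation $-\mu\|u\|_{\dot H^{4}}^{2}-k\gamma\varepsilon\|\phi\|_{\dot H^{8}}^{2}$. On the right remain: (i) the velocity self-advection $\int\Lambda^{3}(u\cdot\nabla u)\cdot\Lambda^{3}u$; (ii) the transport term $\int\Lambda^{6}(u\cdot\nabla\phi)\,\Lambda^{6}\phi$; (iii) the elastic coupling $\int\Lambda^{3}\big(\tfrac{\delta E}{\delta\phi}\nabla\phi\big)\cdot\Lambda^{3}u$; and (iv) the polynomial and nonlocal pieces of $\delta E/\delta\phi$ from \eqref{eq1.11}.

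For (i) I would use incompressibility, $\int(u\cdot\nabla\Lambda^{3}u)\cdot\Lambda^{3}u=0$, write $u\cdot\nabla u=\nabla\cdot(u\otimes u)$, integrate by parts, and apply the product estimate $\|\Lambda^{3}(u\otimes u)\|_{L^{2}}\le C\|u\|_{L^{\infty}}\|u\|_{\dot H^{3}}$ together with Young's inequality against the viscous dissipation to obtain $\le\tfrac{\mu}{2}\|u\|_{\dot H^{4}}^{2}+C\|u\|_{L^{\infty}}^{2}\|u\|_{\dot H^{3}}^{2}$. The heart of the matter — and the step I expect to be the main obstacle — is (ii)--(iv). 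Since the chemical potential is fourth order with a quintic nonlinearity, six differentiations generate products carrying up to eight derivatives on $\phi$, which must be absorbed \emph{entirely} by the two dissipations $\|u\|_{\dot H^{4}}^{2}$, $\|\phi\|_{\dot H^{8}}^{2}$ and by $\mathcal{E}$, using Kato--Ponce commutators, Gagliardo--Nirenberg and $H^{2}\hookrightarrow L^{\infty}$ in three dimensions, and \eqref{eq1.12}. Two features make this delicate: the velocity enters these couplings only at level $H^{3}$ (it has no $H^{6}$ regularity), so derivatives must be shifted onto the more regular $\phi$; and, crucially for the present criterion, every surviving factor of velocity regularity — e.g. the $\|\nabla u\|_{L^{\infty}}\|\phi\|_{\dot H^{6}}^{2}$ produced by the commutator in (ii) — must be interpolated against the viscous dissipation, $\|\nabla u\|_{L^{\infty}}\le C\|u\|_{L^{\infty}}^{1-\theta}\|u\|_{\dot H^{4}}^{\theta}$, and absorbed, so that the only uncontrolled velocity quantity left over is $\|u\|_{L^{\infty}}$ and never $\|\nabla u\|_{L^{\infty}}$. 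Carried out, this yields
\[
\frac{d}{dt}\mathcal{E}(t)\le C\big(1+\|u(\cdot,t)\|_{L^{\infty}}^{2}\big)\big(1+\mathcal{E}(t)\big).
\]

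Finally I would convert $\|u\|_{L^{\infty}}$ into the vorticity norm of \eqref{eq1.21}. By the Biot--Savart law $u=\nabla\times(-\Delta)^{-1}\omega$, the field $u$ is one derivative smoother than $\omega$, so $\|u\|_{\dot B^{0}_{\infty,\infty}}\le C\|\omega\|_{\dot B^{-1}_{\infty,\infty}}$, and the logarithmic Sobolev inequality gives $\|u\|_{L^{\infty}}^{2}\le C\big(1+\|\omega\|_{\dot B^{-1}_{\infty,\infty}}^{2}\ln(e+\|u\|_{H^{3}})\big)$. Setting $y(t)=e+\mathcal{E}(t)$, the inequality above becomes $\tfrac{d}{dt}\ln y\le C\big(1+\|\omega\|_{\dot B^{-1}_{\infty,\infty}}^{2}\ln y\big)$. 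The closure then rests on the precise weight in \eqref{eq1.21}: with $a(t)=\|\omega\|_{\dot B^{-1}_{\infty,\infty}}$ one has $a\le C\|u\|_{H^{3}}\le Cy^{1/2}$, hence $1+\ln(e+a)\le C(1+\ln y)$, and rewriting $a^{2}=\frac{a^{2}}{1+\ln(e+a)}\,(1+\ln(e+a))$ one feeds the factor $\frac{a^{2}}{1+\ln(e+a)}$ — exactly the integrand of \eqref{eq1.21} — into a Gronwall argument calibrated to keep the growth of $\ln y$ linear. Since that integrand is assumed integrable on the finite interval $[0,T_*)$, this forces $\sup_{[0,T_*)}\ln y<\infty$, i.e. $\sup_{[0,T_*)}\mathcal{E}<\infty$, the desired contradiction. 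The same scheme proves Theorem \ref{th1.1}: there one keeps the velocity advection in commutator form, producing $\|\nabla u\|_{L^{\infty}}$ and hence $\|\omega\|_{\dot B^{0}_{\infty,\infty}}$ with a square-root logarithm matching the weight in \eqref{eq1.20}.
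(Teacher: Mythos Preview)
Your overall plan---contradiction, a priori bound on $\|u\|_{H^3}+\|\phi\|_{H^6}$, continuation---matches the paper. But the closure you describe at the end does \emph{not} work, and this is where your route diverges fatally from the paper's. From $\frac{d}{dt}\mathcal{E}\le C(1+\|u\|_{L^\infty}^2)(1+\mathcal{E})$ and the logarithmic Sobolev inequality you obtain, with $z=\ln(e+\mathcal{E})$ and $a=\|\omega\|_{\dot B^{-1}_{\infty,\infty}}$, the inequality $z'\le C(1+a^2 z)$. Your substitution $a^2=\tfrac{a^2}{1+\ln(e+a)}(1+\ln(e+a))$ together with $1+\ln(e+a)\le C(1+z)$ turns this into $z'\le C+Cb(t)(1+z)^2$ with $b=\tfrac{a^2}{1+\ln(e+a)}$. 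This is Riccati, not linear: finiteness of $\int_0^{T_*}b$ does \emph{not} prevent blow-up of $z$, and the usual ``take $T_0$ close to $T_*$ so $\int_{T_0}^{T_*}b<\sigma$'' trick fails because the required smallness depends on $z(T_0)$, which is exactly the quantity you are trying to bound. So the phrase ``calibrated to keep the growth of $\ln y$ linear'' hides a genuine gap.

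The paper avoids this with a \emph{two-tier} hierarchy. First (Lemma \ref{le3.1}) one proves, using only the basic energy law, that $\|\phi\|_{H^3}$ (hence $\|\nabla\phi\|_{L^\infty}$) is bounded on $[0,T_*)$ \emph{unconditionally}; you never invoke this, and several of your couplings in (ii)--(iv) need it. Second, at the $H^1$ level one estimates $X:=\|\omega\|_{L^2}^2+\eta\|\tfrac{\delta E}{\delta\phi}\|_{L^2}^2$: the vortex-stretching term is handled not via $\|u\|_{L^\infty}$ but via the Meyer--Gerard--Oru interpolation $\|\omega\|_{L^4}^2\le C\|\omega\|_{\dot B^{-1}_{\infty,\infty}}\|\nabla\omega\|_{L^2}$ (Lemma \ref{le2.3}), yielding $\tfrac{d}{dt}X\le C(1+a^2)X$. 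Writing $1+a^2=\tfrac{1+a^2}{1+\ln(e+a)}(1+\ln(e+a))$ and using $a\le C\|u\|_{H^3}$ gives, after Gronwall on $[T_0,t]$ with $\int_{T_0}^{T_*}b<\sigma$, the \emph{power} bound $X(t)\le C_0(e+h(t))^{C\sigma}$, where $h=\|\nabla\Delta u\|_{L^2}^2+\hat\eta\|\Delta\tfrac{\delta E}{\delta\phi}\|_{L^2}^2$. Third, the $H^3$-level estimate for $h$ reads $\tfrac{d}{dt}(e+h)\le C(\|\nabla u\|_{L^2}^2+\|\tfrac{\delta E}{\delta\phi}\|_{L^2}^2+1)\big((e+h)^{14C\sigma}+e+h\big)$; choosing $\sigma$ small makes the bracket $\le 2(e+h)$, and now the coefficient is \emph{a priori time-integrable on $[0,T_*)$ by the basic energy law} \eqref{eq3.2}, so a linear Gronwall closes. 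The point is that the intermediate quantities $\|\nabla u\|_{L^2}^2$ and $\|\tfrac{\delta E}{\delta\phi}\|_{L^2}^2$ play two different roles---pointwise controlled by a small power of $h$, and time-integrable independently of $h$---and your single-tier scheme collapses this distinction, which is precisely what makes the logarithmic weight in \eqref{eq1.21} usable.
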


\noindent\textbf{Remark 1.1} Theorems \ref{th1.1} and \ref{th1.2}
are still true, if we replace the vorticity $\omega$ by $\nabla u$
in \eqref{eq1.20} and \eqref{eq1.21}, due to the boundedness of
Riesz transforms in $\dot{B}^{0(per)}_{\infty,\infty}(Q)$ and
$\dot{B}^{-1(per)}_{\infty,\infty}(Q)$. For the definitions of these
spaces, see Section 2.

\noindent\textbf{Remark 1.2} Since $L^{\infty}(Q)\hookrightarrow
\dot{B}^{0(per)}_{\infty,\infty}(Q)$, the result \eqref{eq1.20}
improves the Beale-Kato-Majda blow-up criterion in [25].

\noindent\textbf{Remark 1.3} Observe that $\nabla
u\in\dot{B}^{-1(per)}_{\infty,\infty}(Q)$ is equivalent to
$u\in\dot{B}^{0(per)}_{\infty,\infty}(Q)$ and the Sobelev embedding
$L^{3}_{per}(Q)\hookrightarrow
\dot{B}^{-1(per)}_{\infty,\infty}(Q)$. Therefore, Theorem
\ref{th1.2} implies that if $T_{*}<\infty$, then
\begin{align*}
  &(i)\ \  \int_{0}^{T_{*}}\frac{\|u(\cdot,t)\|_{\dot{B}^{0}_{\infty,\infty}}^{2}}{1+\ln(e+\|u(\cdot,t)\|_{\dot{B}^{0}_{\infty,\infty}})}dt=\infty,\\
  &(ii)\ \ \int_{0}^{T_{*}}\frac{\|\nabla u(\cdot,t)\|_{L^{3}}^{2}}{1+\ln(e+\|\nabla
u(\cdot,t)\|_{L^{3}})}dt=\infty.
\end{align*}

 The rest of the paper is arranged as
follows. In Section 2, we recall the Littlewood-Paley decomposition
and definition of the homogeneous Besov spaces, and review some
crucial lemmas. In Section 3, we establish the bound of
$\|\nabla\Delta\phi\|_{L^{2}}$, which enables us to derive some
specific higher-order energy estimates. In Section 4, we present the
proof of Theorem \ref{th1.1}. Section 5 is devoted to the proof of
Theorem \ref{th1.2}.

\section{Preliminaries}

We first recall the Littlewood-Paley decomposition. Let
$\mathcal{S}(\mathbb{R}^{3})$ be the Schwartz class of rapidly
decreasing function, and $\mathcal{S}'(\mathbb{R}^{3})$ be its dual.
Given $f\in\mathcal{S}(\mathbb{R}^{3})$, the Fourier transform of
it, $\mathcal{F}(f)=\widehat{f}$, is defined by
$$
  \mathcal{F}(f)(\xi)=\widehat{f}(\xi)=\frac{1}{(2\pi)^{3/2}}\int_{\mathbb{R}^{3}}f(x)e^{-ix\cdot\xi}\;dx.
$$
For any given $g\in\mathcal{S}(\mathbb{R}^{3})$, the inverse Fourier
transform $\mathcal{F}^{-1}g=\check{g}$ is defined by
$$
  \mathcal{F}^{-1}(g)(x)=\check{g}(x)=\frac{1}{(2\pi)^{3/2}}\int_{\mathbb{R}^{3}}g(\xi)e^{ix\cdot\xi}\;d\xi.
$$ Let
$\mathcal{D}_{1}=\{\xi\in\mathbb{R}^{3},\ |\xi|\leq\frac{4}{3}\}$
and $\mathcal{D}_{2}=\{\xi\in\mathbb{R}^{3},\ \frac{3}{4}\leq
|\xi|\leq\frac{8}{3}\}$. Choose two non-negative radial functions
$\phi, \psi\in\mathcal{S}(\mathbb{R}^{3})$ supported, respectively,
in $\mathcal{D}_{1}$ and $\mathcal{D}_{2}$ such that
\begin{align*}
  \psi(\xi)+\sum_{j\geq0}\phi(2^{-j}\xi)=1, \ \ \xi\in\mathbb{R}^{3},\\
  \sum_{j\in\mathbb{Z}}\phi(2^{-j}\xi)=1, \ \
  \xi\in\mathbb{R}^{3}\backslash\{0\}.
\end{align*}
Let $h=\mathcal{F}^{-1}\phi$ and $\tilde{h}=\mathcal{F}^{-1}\psi$.
Then we define the dyadic blocks $\Delta_{j}$ and $S_{j}$ as
follows:
\begin{align*}
  \Delta_{j}f=\phi(2^{-j}D)f=2^{3j}\int_{\mathbb{R}^{3}}h(2^{j}y)f(x-y)\;dy,\\
  S_{j}f=\psi(2^{-j}D)f=2^{3j}\int_{\mathbb{R}^{3}}\tilde{h}(2^{j}y)f(x-y)\;dy,
\end{align*}
where $D=(D_1, D_2, D_3)$ and $D_j=i^{-1}\partial_{x_j}$
($i^{2}=-1$). The set $\{\Delta_{j}, S_{j}\}_{j\in\mathbb{Z}}$ is
called the Littlewood-Paley decomposition. Formally,
$\Delta_{j}=S_{j}-S_{j-1}$ is a frequency projection to the annulus
$\{|\xi|\sim 2^{j}\}$, and $S_{j}=\sum_{k\leq j-1}\Delta_{k}$ is a
frequency projection to the ball $\{|\xi|\leq 2^{j}\}$. For more
details, please refer to \cite{L02}.

\medskip

Next we recall the definition of homogeneous Besov spaces. Let
$\mathcal{S}_{h}'(\mathbb{R}^{3})$ be the space of temperate
distributions $f$ such that
\begin{equation*}
\lim_{j\rightarrow-\infty}S_{j}f=0\ \ \text{in}\ \
\mathcal{S}'(\mathbb{R}^{3}).
\end{equation*}
For
$s\in\mathbb{R}$ and $(p,q)\in[1, \infty]\times[1, \infty]$, the
homogeneous Besov space $\dot{B}^{s}_{p,q}(\mathbb{R}^{3})$ is
defined by
\begin{equation*}
  \dot{B}^{s}_{p,q}(\mathbb{R}^{3})=\big\{f\in \mathcal{S}'_{h}(\mathbb{R}^{3}):\ \
  \|f\|_{\dot{B}^{s}_{p,q}}<\infty\big\},
\end{equation*}
where
\begin{equation*}
  \|f\|_{\dot{B}^{s}_{p,q}}=
\begin{cases}
  \Big(\sum_{j\in\mathbb{Z}}2^{jsq}\|\Delta_{j}f\|_{L^{p}}^{q}\Big)^{1/q}\
  \ \text{for}\ \ 1\leq q<\infty,\\
  \sup_{j\in\mathbb{Z}}2^{js}\|\Delta_{j}f\|_{L^{p}}\ \ \ \ \ \ \ \ \ \ \text{for}\
  \ q=\infty.
\end{cases}
\end{equation*}
It is well-known that if either $s<\frac{3}{p}$ or $s=\frac{3}{p}$
and $q=1$, then
$(\dot{B}^{s}_{p,q}(\mathbb{R}^{3}),\|\cdot\|_{\dot{B}^{s}_{p,q}})$
is a Banach space. In particular, when $p=q=2$, we get the
homogeneous Sobolev space
$\dot{H}^{s}(\mathbb{R}^{3})=\dot{B}^{s}_{2,2}(\mathbb{R}^{3})$
which is endowed the equivalent norm
$\|f\|_{\dot{H}^{s}}=\|(-\Delta)^{s/2}f\|_{L^{2}}$. The notation
$H^{s}(\mathbb{R}^{3})$ is the standard inhomogeneous Sobolev spaces
which is endowed the standard norm
$\|f\|_{H^{s}}=\|(-\Delta)^{s/2}f\|_{L^{2}}+\|f\|_{L^{2}}$.

\medskip

We also need to introduce some well-established functional settings
for periodic problems: For $1\leq r\leq \infty$, we denote by
\begin{equation*}
  L^{r}_{per}(Q):=\{u\in L^{r}(\mathbb{R}^{3})\ |\  u(x+e_{i})=u(x)\}
\end{equation*}  endowed the usual norm $\|\cdot\|_{L^{r}}$. For an integer $m>0$, we denote by
\begin{equation*}
  H^{m}_{per}(Q):=\{u\in H^{m}(\mathbb{R}^{3})\ |\ u(x+e_{i})=u(x)\}
\end{equation*} endowed with the usual norm
$\|u\|_{H^{m}}$. For $s\in\mathbb{R}$ and $(p,q)\in[1,
\infty]\times[1, \infty]$, we denote by
\begin{equation*}
  \dot{B}^{s(per)}_{p,q}(Q)=
  \Big\{u\in \dot{B}^{s}_{p,q}(\mathbb{R}^{3}):\ |\  u(x+e_{i})=u(x)\Big\}
\end{equation*}
associated with the norm $\|\cdot\|_{\dot{B}^{s}_{p,q}}$.

\medskip

Before ending this section, we state some well-known inequalities.
The first one comes from \cite{KP88}:

\begin{lemma}\label{le2.1} {\em(\cite{KP88})}
For $s>1$, we have
\begin{equation}\label{eq2.1}
   \|\nabla^{s}(fg)-f\nabla^{s}g\|_{L^{p}}\leq
   C\big(\|\nabla f\|_{L^{p_{1}}}\|\nabla^{s-1}g\|_{L^{q_{1}}}+\|\nabla^{s}f\|_{L^{p_{2}}}\|g\|_{L^{q_{2}}}\big)
\end{equation}
with $1<p, q_{1}$, $p_{2}<\infty$ such that
$\frac{1}{p}=\frac{1}{p_{1}}+\frac{1}{q_{1}}=\frac{1}{p_{2}}+\frac{1}{q_{2}}$.
\end{lemma}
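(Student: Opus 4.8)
The plan is to bound the commutator $[\nabla^{s},f]g=\nabla^{s}(fg)-f\nabla^{s}g$ (with $\nabla^{s}=(-\Delta)^{s/2}$) by running Bony's paraproduct decomposition against the dyadic blocks $\Delta_{j},S_{j}$ fixed in this section. Setting $T_{f}g=\sum_{j}S_{j-1}f\,\Delta_{j}g$, one splits $fg=T_{f}g+\Pi(f,g)$ with $\Pi(f,g)=\sum_{j}\Delta_{j}f\,S_{j+2}g$ collecting the interactions in which $f$ carries the higher or comparable frequency. Applying the same splitting to $f\nabla^{s}g$ and using that $\nabla^{s}$ commutes with the Fourier multiplier $S_{j+2}$, the piece $\Pi(f,\nabla^{s}g)$ cancels exactly against the matching part of $\nabla^{s}\Pi(f,g)$, leaving
\begin{equation*}
[\nabla^{s},f]g=\sum_{j}[\nabla^{s},S_{j-1}f]\Delta_{j}g+\sum_{j}[\nabla^{s},\Delta_{j}f]S_{j+2}g=:\mathrm{I}+\mathrm{II}.
\end{equation*}
Both sums have the uniform shape $\sum_{j}[\nabla^{s},a_{j}]b_{j}$ with frequency-localized factors, and I expect $\mathrm{I}$ to yield the first term of \eqref{eq2.1} and $\mathrm{II}$ the second.

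The engine for each block is a single-derivative gain. Since in both cases the product $a_{j}b_{j}$ and the function $b_{j}$ are frequency-supported in a ball of radius $\lesssim 2^{j}$, one may realize $\nabla^{s}$ on them as convolution against a kernel $h_{j}=\mathcal{F}^{-1}(|\cdot|^{s}\varphi(2^{-j}\cdot))$, for a suitable annular bump $\varphi$, obeying the moment bound $\int|z|\,|h_{j}(z)|\,dz\lesssim 2^{(s-1)j}$ by scaling. Then
\begin{equation*}
[\nabla^{s},a_{j}]b_{j}(x)=\int h_{j}(x-y)\big(a_{j}(y)-a_{j}(x)\big)b_{j}(y)\,dy,
\end{equation*}
and the first-order Taylor expansion $a_{j}(y)-a_{j}(x)=(y-x)\cdot\int_{0}^{1}\nabla a_{j}(x+\theta(y-x))\,d\theta$ turns the factor $y-x$ into the moment bound, giving via H\"older and Young the per-block estimate $\|[\nabla^{s},a_{j}]b_{j}\|_{L^{p}}\lesssim 2^{(s-1)j}\|\nabla a_{j}\|_{L^{r_{1}}}\|b_{j}\|_{L^{r_{2}}}$ whenever $\frac1p=\frac1{r_1}+\frac1{r_2}$. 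For $\mathrm{I}$ this reads $2^{(s-1)j}\|\nabla f\|_{L^{p_{1}}}\|\Delta_{j}g\|_{L^{q_{1}}}\sim\|\nabla f\|_{L^{p_{1}}}\|\Delta_{j}\nabla^{s-1}g\|_{L^{q_{1}}}$, while for $\mathrm{II}$, using $\|\nabla\Delta_{j}f\|_{L^{p_{2}}}\sim 2^{j}\|\Delta_{j}f\|_{L^{p_{2}}}$, it reads $2^{sj}\|\Delta_{j}f\|_{L^{p_{2}}}\|g\|_{L^{q_{2}}}\sim\|\Delta_{j}\nabla^{s}f\|_{L^{p_{2}}}\|g\|_{L^{q_{2}}}$.

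It remains to sum in $j$, which is where the Lebesgue exponents $1<p,p_{i},q_{i}<\infty$ are essential. For $\mathrm{I}$ the blocks $[\nabla^{s},S_{j-1}f]\Delta_{j}g$ are spectrally supported in annuli $\{|\xi|\sim 2^{j}\}$ of bounded overlap, so the Littlewood--Paley square-function characterization of $L^{p}$ converts the per-block bounds into $\|\nabla f\|_{L^{p_{1}}}\|\nabla^{s-1}g\|_{L^{q_{1}}}$; here a plain triangle inequality would be too lossy, producing a Besov rather than a Lebesgue norm. For $\mathrm{II}$ the blocks are only ball-supported at scale $2^{j}$, so this is a remainder-type sum, and the weight $2^{sj}$ with $s>0$ together with maximal-function control of the convolution kernels give convergence and the bound $\|\nabla^{s}f\|_{L^{p_{2}}}\|g\|_{L^{q_{2}}}$. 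The binding constraint $s>1$ comes from $\mathrm{I}$: the commutator buys exactly one derivative, so the residual $\nabla^{s-1}g$ is a genuine positive-order object only when $s-1>0$. Organizing both summations rigorously---together with the moment bound $\int|z|\,|h_{j}(z)|\,dz\lesssim 2^{(s-1)j}$ that feeds them---is where I expect the real work to lie; combining the estimates for $\mathrm{I}$ and $\mathrm{II}$ then yields \eqref{eq2.1}.
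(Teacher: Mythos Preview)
The paper does not prove Lemma~2.1 at all; it merely records the statement and cites Kato--Ponce~\cite{KP88}. So there is no proof in the paper to compare your sketch against, and the paraproduct route you outline is the standard modern approach to this commutator estimate.

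One technical point in your sketch deserves care. For term $\mathrm{II}$ the low-frequency factor $b_{j}=S_{j+2}g$ is supported in a \emph{ball} containing the origin, so the multiplier $|\xi|^{s}\varphi(2^{-j}\xi)$ with $\varphi$ equal to $1$ on that ball is not smooth at $\xi=0$ for non-integer $s$, and the associated kernel $h_{j}$ need not obey the first-moment bound $\int|z|\,|h_{j}(z)|\,dz\lesssim 2^{(s-1)j}$ you invoke. The usual remedy is either to re-expand $S_{j+2}g=\sum_{k\le j+1}\Delta_{k}g$ so every factor is annulus-supported, or to estimate the two pieces $\nabla^{s}(\Delta_{j}f\,S_{j+2}g)$ and $\Delta_{j}f\,\nabla^{s}S_{j+2}g$ separately via Bernstein. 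Relatedly, your summation of $\mathrm{II}$ is a bit underspecified: a plain triangle inequality would yield only the $\dot{B}^{s}_{p_{2},1}$ norm of $f$, not $\|\nabla^{s}f\|_{L^{p_{2}}}$; one genuinely needs a square-function/maximal-function argument here as well, not just the positivity of $s$. You correctly flag these summations as ``where the real work lies,'' but the outline as written does not quite close them.
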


The second one can be found in \cite{KOT02} and the proof follows
from the Littlewood-Paley decomposition.

\begin{lemma}\label{le2.2} {\em(\cite{KOT02})}
For all $f\in H^{s-1}(\mathbb{R}^{3})$ with $s>\frac{5}{2}$, we have
\begin{equation}\label{eq2.2}
   \|f\|_{L^{\infty}}\leq
   C\big(1+\|f\|_{\dot{B}^{0}_{\infty,\infty}}\ln^{1/2}(e+\|f\|_{H^{s-1}})\big).
\end{equation}
\end{lemma}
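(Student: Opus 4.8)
The plan is to estimate $\|f\|_{L^{\infty}}$ directly through the Littlewood--Paley decomposition $f=\sum_{j\in\mathbb{Z}}\Delta_{j}f$ introduced in Section 2, and to split the resulting series at a dyadic threshold $N\in\mathbb{N}$ that will be optimized at the very end in terms of $\|f\|_{H^{s-1}}$. Writing $A:=\|f\|_{\dot{B}^{0}_{\infty,\infty}}$, the triangle inequality gives
\begin{equation*}
\|f\|_{L^{\infty}}\leq\sum_{j<0}\|\Delta_{j}f\|_{L^{\infty}}+\sum_{0\leq j\leq N}\|\Delta_{j}f\|_{L^{\infty}}+\sum_{j>N}\|\Delta_{j}f\|_{L^{\infty}},
\end{equation*}
and I would treat the three frequency ranges by different tools: Bernstein's inequality for the low and high ranges, and the defining bound $\|\Delta_{j}f\|_{L^{\infty}}\leq A$ for the intermediate range. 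The two outer sums are the routine part, and the intermediate sum is where the logarithmic factor is manufactured.

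For the high range $j>N$, I would use Bernstein's inequality $\|\Delta_{j}f\|_{L^{\infty}}\leq C2^{3j/2}\|\Delta_{j}f\|_{L^{2}}$, insert the weight $2^{(s-1)j}$ matched to the $\dot{H}^{s-1}$ norm, and apply Cauchy--Schwarz to obtain $\sum_{j>N}\|\Delta_{j}f\|_{L^{\infty}}\leq C\big(\sum_{j>N}2^{(5-2s)j}\big)^{1/2}\|f\|_{H^{s-1}}$. The geometric series converges precisely because the hypothesis $s>\tfrac{5}{2}$ forces $5-2s<0$, which yields $\sum_{j>N}\|\Delta_{j}f\|_{L^{\infty}}\leq C2^{-(s-5/2)N}\|f\|_{H^{s-1}}$; choosing $N$ of order $\log_{2}(e+\|f\|_{H^{s-1}})$ then makes this tail of order one. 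For the low range $j<0$ the number of blocks is infinite, so the bound $\|\Delta_{j}f\|_{L^{\infty}}\leq A$ cannot be summed; instead Bernstein's inequality together with Cauchy--Schwarz renders the series convergent, $\sum_{j<0}\|\Delta_{j}f\|_{L^{\infty}}\leq C\big(\sum_{j<0}2^{3j}\big)^{1/2}\|f\|_{L^{2}}\leq C\|f\|_{L^{2}}$, contributing only a controlled lower-order term.

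The delicate point, and the main obstacle, is the intermediate range $0\leq j\leq N$, where the exponent $\tfrac{1}{2}$ on the logarithm must be produced. A term-by-term bound $\|\Delta_{j}f\|_{L^{\infty}}\leq A$ gives only $\sum_{0\leq j\leq N}\|\Delta_{j}f\|_{L^{\infty}}\leq(N+1)A$, which would cost a logarithm to the \emph{first} power. To gain the square root one must instead pair the scales through Cauchy--Schwarz,
\begin{equation*}
\sum_{0\leq j\leq N}\|\Delta_{j}f\|_{L^{\infty}}\leq(N+1)^{1/2}\Big(\sum_{0\leq j\leq N}\|\Delta_{j}f\|_{L^{\infty}}^{2}\Big)^{1/2},
\end{equation*}
and then control the $\ell^{2}$ sum of the block norms in terms of $A$; this is the step that has to be carried out with care, since it is exactly what converts the number of scales $N\sim\log_{2}(e+\|f\|_{H^{s-1}})$ into the factor $\sqrt{N}$. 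Once this intermediate estimate is in place, collecting the low, intermediate, and high contributions and substituting the optimal choice $N\sim\log_{2}(e+\|f\|_{H^{s-1}})$ yields $\|f\|_{L^{\infty}}\leq C\big(1+A\,(\log(e+\|f\|_{H^{s-1}}))^{1/2}\big)$, which is the assertion \eqref{eq2.2}.
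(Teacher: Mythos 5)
Your skeleton --- Littlewood--Paley decomposition, three frequency ranges, Bernstein's inequality on the two outer ranges, and optimization of the threshold $N\sim\log_{2}(e+\|f\|_{H^{s-1}})$ --- is the standard route, and indeed all that the paper itself offers is the citation to \cite{KOT02} together with the remark that the proof follows from the Littlewood--Paley decomposition; your high-frequency estimate is correct as written. But the intermediate range, exactly the step you defer ``to be carried out with care,'' is a genuine gap, and it is not one that care can fill. After your Cauchy--Schwarz step the \emph{only} information available on the blocks is the defining bound $\|\Delta_{j}f\|_{L^{\infty}}\leq A$, so
\begin{equation*}
(N+1)^{1/2}\Big(\sum_{0\leq j\leq N}\|\Delta_{j}f\|_{L^{\infty}}^{2}\Big)^{1/2}
\leq(N+1)^{1/2}\big((N+1)A^{2}\big)^{1/2}=(N+1)A,
\end{equation*}
which is precisely the trivial bound you set out to beat: pairing the scales through Cauchy--Schwarz gains a square root only when the $\ell^{2}$ sum across scales is genuinely smaller than $N\cdot(\sup)^{2}$, and the norm $\|f\|_{\dot{B}^{0}_{\infty,\infty}}=\sup_{j}\|\Delta_{j}f\|_{L^{\infty}}$ carries no square-summability across scales whatsoever. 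This is the structural difference from the Brezis--Gallouet--Wainger argument you are imitating, where the intermediate norm is $\ell^{2}$-based ($H^{3/2}(\mathbb{R}^{3})$, or more generally $\dot{B}^{0}_{\infty,2}$), so that $\sum_{0\leq j\leq N}\|\Delta_{j}f\|_{L^{\infty}}\leq(N+1)^{1/2}\|f\|_{\dot{B}^{0}_{\infty,2}}$ really does produce $\sqrt{N}$.

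Moreover, the failure is not a defect of your particular pairing trick: with sup-type information only, the bound $(N+1)A$ on the middle range is sharp, so no rearrangement of the same ingredients yields $\sqrt{N}\,A$. Consider $f_{N}(x)=\varphi(x)\sum_{j=1}^{N}\cos(2^{j}x_{1})$ with $\varphi$ a Schwartz bump whose Fourier transform is supported near the origin: each dyadic block sees at most finitely many summands, so $\|f_{N}\|_{\dot{B}^{0}_{\infty,\infty}}\leq C$ uniformly in $N$, while $\|f_{N}\|_{L^{\infty}}\geq f_{N}(0)\geq cN$ and $\ln(e+\|f_{N}\|_{H^{s-1}})\sim N$. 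Hence the first-power logarithm --- which is what \cite{KOT02} actually establish for the $\dot{B}^{0}_{\infty,\infty}$ norm --- is optimal in this framework, and the $\ln^{1/2}$ gain asserted in \eqref{eq2.2} requires an $\ell^{2}$-across-scales quantity such as $\|f\|_{\dot{B}^{0}_{\infty,2}}$ in place of $\|f\|_{\dot{B}^{0}_{\infty,\infty}}$; an argument using only $A$, as yours does, cannot reach the stated inequality. A secondary point: your low-frequency bound $C\|f\|_{L^{2}}$ is likewise not of the form $C\big(1+A\ln^{1/2}(e+\|f\|_{H^{s-1}})\big)$; on $\mathbb{R}^{3}$ that range should be handled via the splitting $\|\Delta_{j}f\|_{L^{\infty}}\leq\min\big(A,\,C2^{3j/2}\|f\|_{L^{2}}\big)$, which again costs a logarithm, although in the paper's periodic applications (where $\omega$ has zero mean) this range is harmless. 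In short, you have correctly located the crux of the lemma, but the proposed mechanism fails there, and the failure is structural rather than technical.
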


The last one comes from \cite{MGO96}, see also \cite{GG11}.
\begin{lemma}\label{le2.3} {\em(\cite{MGO96, GG11})}
For all $f\in \dot{H}^{1}(\mathbb{R}^{3})\cap
\dot{B}^{-1}_{\infty,\infty}(\mathbb{R}^{3})$, we have
\begin{equation}\label{eq2.3}
   \|f\|_{L^{4}}\leq C
   \|f\|_{\dot{B}^{-1}_{\infty,\infty}}^{1/2}\|
   f\|_{\dot{H}^{1}}^{1/2}.
\end{equation}
\end{lemma}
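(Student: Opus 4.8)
The plan is to prove the equivalent estimate
\[
  \|f\|_{L^4}^2 \le C\,\|f\|_{\dot B^{-1}_{\infty,\infty}}\,\|f\|_{\dot H^1},
\]
from which \eqref{eq2.3} follows immediately upon taking square roots. The engine is the Littlewood--Paley square function combined with a pointwise frequency splitting whose cut is allowed to depend on the spatial variable.

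First I would invoke the classical square-function characterization of $L^p$ for $1<p<\infty$ (a consequence of the Mikhlin--H\"ormander multiplier theorem, or of the Fefferman--Stein inequalities): with $p=4$,
\[
  \|f\|_{L^4}^2 \;\approx\; \Big\|\big(\textstyle\sum_{j\in\mathbb Z}|\Delta_j f|^2\big)^{1/2}\Big\|_{L^4}^2 = \big\|\textstyle\sum_{j\in\mathbb Z}|\Delta_j f|^2\big\|_{L^2}.
\]
Thus it suffices to control the pointwise quantity $G(x):=\sum_{j}|\Delta_j f(x)|^2$ in $L^2$. For this I would record two elementary pointwise bounds on each block. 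From the very definition of the Besov norm one has $\|\Delta_j f\|_{L^\infty}\le 2^{j}\|f\|_{\dot B^{-1}_{\infty,\infty}}$, hence $|\Delta_j f(x)|\le 2^{j}B$ with $B:=\|f\|_{\dot B^{-1}_{\infty,\infty}}$. On the other hand, writing $A(x):=\big(\sum_{k}2^{2k}|\Delta_k f(x)|^2\big)^{1/2}$ for the pointwise $\dot H^1$ square function, one has trivially $2^{2j}|\Delta_j f(x)|^2\le A(x)^2$, that is $|\Delta_j f(x)|\le 2^{-j}A(x)$; moreover $\|A\|_{L^2}\approx\|f\|_{\dot H^1}$ by the Littlewood--Paley characterization of $\dot H^1$.

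The heart of the argument is to reconcile these two bounds through an $x$-dependent split. Fixing $x$ with $A(x)>0$ (the complementary case being trivial) and choosing the integer $j_0=j_0(x)$ with $2^{j_0}\sim (A(x)/B)^{1/2}$, I would bound the low frequencies $j\le j_0$ by the Besov estimate and the high frequencies $j>j_0$ by the $\dot H^1$ estimate:
\[
  G(x)\le B^2\sum_{j\le j_0}2^{2j} + A(x)^2\sum_{j>j_0}2^{-2j} \le C\big(B^2 2^{2j_0}+A(x)^2 2^{-2j_0}\big)\le C\,B\,A(x),
\]
where both geometric series converge precisely because $2^{2j}$ is summed below the cut and $2^{-2j}$ above it, and the choice of $j_0$ balances the two contributions. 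Integrating this pointwise inequality and using that $B$ is a constant yields $\|G\|_{L^2}\le C\,B\,\|A\|_{L^2}\approx \|f\|_{\dot B^{-1}_{\infty,\infty}}\|f\|_{\dot H^1}$, which is the desired bound.

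The main obstacle, and the reason a single norm cannot suffice, is that the Besov bound $|\Delta_j f|\lesssim 2^{j}B$ is square-summable only over the low frequencies, while the $\dot H^1$ bound $|\Delta_j f|\le 2^{-j}A$ is square-summable only over the high frequencies; neither controls $G$ by itself, and a global frequency truncation at a fixed level would inherit one of the two divergences. Allowing the splitting level $j_0$ to depend on $x$ is exactly what cancels the opposite divergences, so it is this pointwise optimization that must be executed with care. The one remaining technical point is the justification of the square-function equivalence at $p=4$, which is standard but genuinely relies on $1<p<\infty$.
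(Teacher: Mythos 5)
Your argument is correct, and it is worth noting that the paper itself offers no proof of this lemma: it is quoted directly from the cited references \cite{MGO96, GG11}, and your proposal is a faithful rendition of the standard argument found there. Each step checks out: the equivalence $\|f\|_{L^4}^2\approx\bigl\|\sum_j|\Delta_jf|^2\bigr\|_{L^2}$ is the Littlewood--Paley characterization of $L^p$ at $p=4$ (valid since $f\in\mathcal{S}'_h$); the two block bounds $|\Delta_jf(x)|\le 2^jB$ and $|\Delta_jf(x)|\le 2^{-j}A(x)$ with $\|A\|_{L^2}\approx\|f\|_{\dot H^1}$ are immediate from the definitions; and the $x$-dependent cut $2^{j_0(x)}\sim(A(x)/B)^{1/2}$ balances the two convergent geometric tails to give $G(x)\le CBA(x)$ pointwise, with the degenerate cases $B=0$ and $A(x)=0$ handled as you say. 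One small remark: you can dispense with the only nontrivial ingredient you flag, the square-function theorem at $p=4$, by running the same low/high split on $f$ itself rather than on $G$: bound $|S_{j_0}f(x)|\le\sum_{k<j_0}2^kB\le C2^{j_0}B$ and $|(I-S_{j_0})f(x)|\le\sum_{k\ge j_0}|\Delta_kf(x)|\le C2^{-j_0}A(x)$ by Cauchy--Schwarz, so that optimizing in $j_0$ gives the pointwise bound $|f(x)|\le C\bigl(BA(x)\bigr)^{1/2}$ directly, and squaring and integrating yields \eqref{eq2.3} using only the one-sided $\dot H^1$ square-function identity, which is Plancherel. This variant is marginally more elementary but buys nothing essential; your proof as written is complete.
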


\section{The bound of $\|\nabla\Delta\phi\|_{L^{2}}$}

By the basic energy estimate \eqref{eq1.12}, we can easily get the
following uniform estimates (cf. \cite{DLL07,WX12}):
\begin{equation}\label{eq3.1}
  \|u(\cdot,t)\|_{L^{2}}+\|\phi(\cdot,t)\|_{H^{2}}\leq C \ \ \text{for
  all}\ \
  t\geq0,
\end{equation}
\begin{equation}\label{eq3.2}
  \int_{0}^{+\infty}\Big(\mu\|\nabla u(\cdot,t)\|_{L^{2}}^{2}+\gamma\|\frac{\delta E}{\delta \phi}(\cdot,t)\|_{L^{2}}^{2}\Big)dt\leq
  C,
\end{equation}
where $C$ is a constant depending only on $\|u_{0}\|_{L^{2}}$,
$\|\phi_{0}\|_{H^{2}}$ and coefficients of the system except the
viscosity $\mu$.

\begin{lemma}\label{le3.1}
Assume that $(u_{0}, \phi_{0})\in H^{3}_{per}(Q)\times
H^{6}_{per}(Q)$ with $\nabla\cdot u_{0}=0$. For any smooth solution
$(u,\phi)$ to the system \eqref{eq1.1}--\eqref{eq1.5}, we have
\begin{align}\label{eq3.3}
  \sup_{0\leq t\leq T}\|\nabla\Delta\phi(\cdot,t)\|_{L^{2}}\leq C
\end{align}
for any $0<T<\infty$, where $C$ is a constant depending only on
$\|u_{0}\|_{H^{1}}$, $\|\phi_{0}\|_{H^{3}}$, $T$ and coefficients of
the system.
\end{lemma}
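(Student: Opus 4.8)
The plan is to bound $\|\nabla\Delta\phi\|_{L^{2}}=\|\phi\|_{\dot H^{3}}$ by a higher–order energy estimate for the phase equation \eqref{eq1.3}, exploiting the biharmonic (parabolic) dissipation hidden in the chemical potential together with the a priori bounds \eqref{eq3.1}--\eqref{eq3.2}. First I would test \eqref{eq1.3} with $(-\Delta)^{3}\phi$. Since $\frac{\delta E(\phi)}{\delta\phi}$ has leading term $k\varepsilon\Delta^{2}\phi$ by \eqref{eq1.11}, the contribution $-\gamma\int(-\Delta)^{3}\phi\,k\varepsilon\Delta^{2}\phi\,dx=-\gamma k\varepsilon\|\phi\|_{\dot H^{5}}^{2}$ produces a good dissipative term, leading (schematically) to
\[
\tfrac{1}{2}\tfrac{d}{dt}\|\phi\|_{\dot H^{3}}^{2}+\gamma k\varepsilon\|\phi\|_{\dot H^{5}}^{2}=T+N,
\]
where $T=-\int(-\Delta)^{3}\phi\,(u\cdot\nabla\phi)\,dx$ is the transport term and $N$ collects the remaining, lower–order contributions of $\frac{\delta E(\phi)}{\delta\phi}$. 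The facts I treat as known are $\sup_{t}\|\phi\|_{H^{2}}\le C$ and $\int_{0}^{T}\|\nabla u\|_{L^{2}}^{2}\,dt\le C$ from \eqref{eq3.1}--\eqref{eq3.2}, and $\|\phi\|_{L^{\infty}}\le C\|\phi\|_{H^{2}}\le C$ by Sobolev embedding in three dimensions.

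The hard part will be the transport term $T$, which must be controlled using only the $H^{1}$–regularity of $u$ available in $L^{2}_{t}$; any appearance of a higher norm of $u$ would be fatal, since no derivative of $u$ can be propagated pointwise. The key is to integrate by parts once so that five derivatives fall on $\phi$, namely $T=-\int\nabla\Delta^{2}\phi\cdot\nabla(u\cdot\nabla\phi)\,dx$ with $\|\nabla\Delta^{2}\phi\|_{L^{2}}=\|\phi\|_{\dot H^{5}}$. Writing $\partial_{k}(u_{j}\partial_{j}\phi)=(\partial_{k}u_{j})\partial_{j}\phi+u_{j}\partial_{k}\partial_{j}\phi$, I would bound the first piece by $\|\phi\|_{\dot H^{5}}\|\nabla u\|_{L^{2}}\|\nabla\phi\|_{L^{\infty}}$ and the second by $\|\phi\|_{\dot H^{5}}\|u\|_{L^{6}}\|\nabla^{2}\phi\|_{L^{3}}$, via Hölder with exponents $(2,2,\infty)$ and $(2,6,3)$. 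Using $\|u\|_{L^{6}}\le C\|u\|_{H^{1}}$, $\|\nabla\phi\|_{L^{\infty}}\le C\|\phi\|_{H^{3}}\le C(1+\|\phi\|_{\dot H^{3}})$, and $\|\nabla^{2}\phi\|_{L^{3}}\le C\|\phi\|_{\dot H^{5/2}}\le C(1+\|\phi\|_{\dot H^{3}})$ (interpolation against the bounded $\dot H^{2}$ norm), Young's inequality gives
\[
|T|\le\tfrac{\gamma k\varepsilon}{4}\|\phi\|_{\dot H^{5}}^{2}+C\big(1+\|\nabla u\|_{L^{2}}^{2}\big)\big(1+\|\phi\|_{\dot H^{3}}^{2}\big),
\]
so that only $\|u\|_{H^{1}}$ ever enters.

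For $N$, each summand of $\frac{\delta E(\phi)}{\delta\phi}$ other than $k\varepsilon\Delta^{2}\phi$ carries at most two derivatives of $\phi$ (see \eqref{eq1.11}), times a polynomial in $\phi$ and the bounded scalars $A(\phi)-\alpha$, $B(\phi)-\beta$. After one integration by parts each such contribution has the form $\int\nabla P\cdot\nabla\Delta^{2}\phi\,dx$ with $\|\nabla P\|_{L^{2}}\le C(1+\|\phi\|_{\dot H^{3}})$, the constant using $\|\phi\|_{L^{\infty}}\le C$, $\|\phi\|_{H^{2}}\le C$ and interpolation; hence $|N|\le\tfrac{\gamma k\varepsilon}{4}\|\phi\|_{\dot H^{5}}^{2}+C(1+\|\phi\|_{\dot H^{3}}^{2})$. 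Absorbing the two $\|\phi\|_{\dot H^{5}}^{2}$ terms on the left leaves
\[
\tfrac{d}{dt}\|\phi\|_{\dot H^{3}}^{2}\le C\big(1+\|\nabla u\|_{L^{2}}^{2}\big)\big(1+\|\phi\|_{\dot H^{3}}^{2}\big).
\]
Since $\int_{0}^{T}(1+\|\nabla u\|_{L^{2}}^{2})\,dt\le C(1+T)$ by \eqref{eq3.2}, Gronwall's inequality yields $\sup_{0\le t\le T}\|\phi\|_{\dot H^{3}}^{2}\le(1+\|\phi_{0}\|_{\dot H^{3}}^{2})\exp\{C(1+T)\}$, i.e.\ $\sup_{[0,T]}\|\nabla\Delta\phi\|_{L^{2}}\le C$, with $C$ depending only on $\|\phi_{0}\|_{H^{3}}$, the energy constant (hence $\|u_{0}\|_{L^{2}}\le\|u_{0}\|_{H^{1}}$ and $\|\phi_{0}\|_{H^{2}}$), $T$ and the coefficients, as claimed.

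The only delicate points to make rigorous are the justification of testing with $(-\Delta)^{3}\phi$ and the integrations by parts (legitimate for the smooth periodic solution, whose $H^{6}$–regularity of $\phi$ gives ample room), and the derivative bookkeeping in $T$ described above, which is what allows the estimate to close without ever controlling $u$ above the $H^{1}$ level.
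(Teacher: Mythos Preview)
Your proposal is correct and follows essentially the same argument as the paper's proof. Testing \eqref{eq1.3} with $(-\Delta)^{3}\phi$ is equivalent to the paper's procedure of applying $\Delta$ and multiplying by $-\Delta^{2}\phi$; your handling of the transport term (one integration by parts, then H\"older with exponents $(2,2,\infty)$ and $(2,6,3)$, then Young's inequality) matches the paper's estimate of $I_{1}$ exactly, and your compressed treatment of the lower-order pieces of $\frac{\delta E}{\delta\phi}$ is what the paper carries out term-by-term in $I_{21}$--$I_{23}$, leading to the same Gronwall inequality \eqref{eq3.11}.
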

\begin{proof}
Taking $\Delta$ on \eqref{eq1.3}, multiplying the resultant by
$-\Delta^{2}\phi$, and integrating over $Q$, we obtain
\begin{align}\label{eq3.4}
  \frac{1}{2}\frac{d}{dt}\|\nabla\Delta\phi\|_{L^{2}}^{2}=-\int_{Q}\nabla\cdot(u\cdot\nabla\phi)\cdot\nabla\Delta^{2}\phi dx
  -\gamma\int_{Q}\nabla\frac{\delta
  E}{\delta\phi}\cdot\nabla\Delta^{2}\phi dx:=I_{1}+I_{2}.
\end{align}
For $I_{1}$, by using the interpolation inequality
$\|\nabla^{2}\phi\|_{L^{3}}^{2}\leq
C\|\nabla^{2}\phi\|_{L^{2}}\|\nabla\Delta\phi\|_{L^{2}}$, we can
infer from \eqref{eq3.1} that
\begin{align}\label{eq3.5}
  I_{1}&\leq
  \frac{k\gamma\varepsilon}{8}\|\nabla\Delta^{2}\phi\|_{L^{2}}^{2}+C\|\nabla u\cdot\nabla\phi\|_{L^{2}}^{2}
  +C\|u\cdot\nabla^{2}\phi\|_{L^{2}}^{2}\nonumber\\
  &\leq \frac{k\gamma\varepsilon}{8}\|\nabla\Delta^{2}\phi\|_{L^{2}}^{2}+C\big(\|\nabla u\|_{L^{2}}^{2}\|\nabla\phi\|_{L^{\infty}}^{2}
  +\|u\|_{L^{6}}^{2}\|\nabla^{2}\phi\|_{L^{3}}^{2}\big)\nonumber\\
  &\leq \frac{k\gamma\varepsilon}{8}\|\nabla\Delta^{2}\phi\|_{L^{2}}^{2}+C\|\nabla
  u\|_{L^{2}}^{2}(\|\nabla\Delta\phi\|_{L^{2}}^{2}+1)
  +C\|\nabla u\|_{L^{2}}^{2}\|\nabla^{2}\phi\|_{L^{2}}\|\nabla\Delta\phi\|_{L^{2}}\nonumber\\
  &\leq \frac{k\gamma\varepsilon}{8}\|\nabla\Delta^{2}\phi\|_{L^{2}}^{2}+C\|\nabla
  u\|_{L^{2}}^{2}(\|\nabla\Delta\phi\|_{L^{2}}^{2}+1).
\end{align}
For $I_{2}$, since $A(\phi)$ and $B(\phi)$ are functions depending
only on time, by \eqref{eq1.11}, we obtain
\begin{align}\label{eq3.6}
  I_{2}&=-\gamma\int_{Q}\nabla\Big[kg(\phi)+M_{1}(A(\phi)-\alpha)+M_{2}(B(\phi)-\beta)f(\phi)\Big]\cdot\nabla\Delta^{2}\phi
  dx\nonumber\\
  &=k\gamma\int_{Q}\nabla\Delta f(\phi)\cdot\nabla\Delta^{2}\phi
  dx-\frac{k\gamma}{\varepsilon^{2}}\int_{Q}\nabla[(3\phi^{2}-1)f(\phi)]\cdot\nabla\Delta^{2}\phi dx\nonumber\\
  &\ \ \
  -M_{2}\gamma(B(\phi)-\beta)\int_{Q}\nabla f(\phi)\cdot\nabla\Delta^{2}\phi
  dx\nonumber\\&:=I_{21}+I_{22}+I_{23}.
\end{align}
Note that
$f(\phi)=-\varepsilon\Delta\phi+\frac{1}{\varepsilon}(\phi^{2}-1)\phi$,
by \eqref{eq3.1}, we can estimate $I_{2i}$ $(i=1,2,3)$ as follows:
\begin{align}\label{eq3.7}
  I_{21}&=-k\varepsilon\gamma\|\nabla\Delta^{2}\phi\|_{L^{2}}^{2}+\frac{k\gamma}{\varepsilon}\int_{Q}\nabla\Delta(\phi^{3}-\phi)\cdot\nabla\Delta^{2}\phi
  dx\nonumber\\
  &\leq
  -\frac{7k\varepsilon\gamma}{8}\|\nabla\Delta^{2}\phi\|_{L^{2}}^{2}+C\|\nabla\Delta(\phi^{3}-\phi)\|_{L^{2}}^{2}\nonumber\\
  &\leq
  -\frac{7k\varepsilon\gamma}{8}\|\nabla\Delta^{2}\phi\|_{L^{2}}^{2}+C\big(\|\phi\|_{L^{\infty}}^{4}\|\nabla\Delta\phi\|_{L^{2}}^{2}
  +\|\phi\|_{L^{\infty}}^{2}\|\nabla\phi\|_{L^{6}}^{2}\|\Delta\phi\|_{L^{3}}^{2}\nonumber\\
  &\ \ \
  +\|\nabla\phi\|_{L^{6}}^{6}+\|\nabla\Delta\phi\|_{L^{2}}^{2}\big)\nonumber\\
  &\leq
  -\frac{7k\varepsilon\gamma}{8}\|\nabla\Delta^{2}\phi\|_{L^{2}}^{2}+C\big(\|\nabla\Delta\phi\|_{L^{2}}^{2}+1\big);
\end{align}
\begin{align}\label{eq3.8}
  I_{22}&=-\frac{6k\gamma}{\varepsilon^{2}}\int_{Q}\phi\nabla\phi f(\phi)\cdot\nabla\Delta^{2}\phi dx
  -\frac{k\gamma}{\varepsilon^{2}}\int_{Q}(3\phi^{2}-1)\nabla f(\phi)\cdot\nabla\Delta^{2}\phi dx\nonumber\\
  &\leq
  \frac{k\varepsilon\gamma}{8}\|\nabla\Delta^{2}\phi\|_{L^{2}}^{2}+C\Big(\|\phi\nabla\phi f(\phi)\|_{L^{2}}^{2}
  +\|(3\phi^{2}-1)\nabla f(\phi)\|_{L^{2}}^{2}\Big)\nonumber\\
  &\leq
  \frac{k\varepsilon\gamma}{8}\|\nabla\Delta^{2}\phi\|_{L^{2}}^{2}+
  C\Big(\|\phi\nabla\phi\Delta\phi\|_{L^{2}}^{2}+\|\phi^{2}(\phi^{2}-1)\nabla\phi\|_{L^{2}}^{2}\nonumber\\
  &\ \ \ +\|(3\phi^{2}-1)\nabla\Delta\phi\|_{L^{2}}^{2}
  +\|(3\phi^{2}-1)\nabla(\phi^{3}-\phi)\|_{L^{2}}^{2}
  \Big)\nonumber\\
  &\leq
  \frac{k\varepsilon\gamma}{8}\|\nabla\Delta^{2}\phi\|_{L^{2}}^{2}+C\Big(\|\phi\|_{L^{\infty}}^{2}\|\nabla\phi\|_{L^{6}}^{2}\|\Delta\phi\|_{L^{3}}^{2}
  +\|\phi\|_{L^{\infty}}^{4}\|\phi^{2}-1\|_{L^{\infty}}^{2}\|\nabla\phi\|_{L^{2}}^{2}\nonumber\\
  &\ \ \ +
  \|3\phi^{2}-1\|_{L^{\infty}}^{2}\|\nabla\Delta\phi\|_{L^{2}}^{2}+\|3\phi^{2}-1\|_{L^{\infty}}^{4}\|\nabla\phi\|_{L^{2}}^{2}\Big)
  \nonumber\\
  &\leq
  \frac{k\varepsilon\gamma}{8}\|\nabla\Delta^{2}\phi\|_{L^{2}}^{2}+C\big(\|\nabla\Delta\phi\|_{L^{2}}^{2}+1\big);
\end{align}
\begin{align}\label{eq3.9}
  I_{23}&=M_{2}\varepsilon\gamma(B(\phi)-\beta)\int_{Q}\nabla\Delta\phi\cdot\nabla\Delta^{2}\phi
  dx-\frac{M_{2}\gamma(B(\phi)-\beta)}{\varepsilon}\int_{Q}\nabla(\phi^{3}-\phi)\cdot\nabla\Delta^{2}\phi
  dx\nonumber\\
  &\leq
  \frac{k\varepsilon\gamma}{8}\|\nabla\Delta^{2}\phi\|_{L^{2}}^{2}+C(B(\phi)-\beta)^{2}\Big( \|\nabla\Delta\phi\|_{L^{2}}^{2}
  +\|\nabla(\phi^{3}-\phi)\|_{L^{2}}^{2}\Big)\nonumber\\
  &\leq
  \frac{k\varepsilon\gamma}{8}\|\nabla\Delta^{2}\phi\|_{L^{2}}^{2}+C\Big(\|\nabla\phi\|_{L^{2}}^{4}
  +\|\phi^{2}-1\|_{L^{2}}^{4}+1\Big)\Big(\|\nabla\Delta\phi\|_{L^{2}}^{2}+\|\phi^{2}-1\|_{L^{\infty}}^{2}\|\nabla\phi\|_{L^{2}}^{2}\Big)
  \nonumber\\
  &\leq
  \frac{k\varepsilon\gamma}{8}\|\nabla\Delta^{2}\phi\|_{L^{2}}^{2}+C\big(\|\nabla\Delta\phi\|_{L^{2}}^{2}+1\big).
\end{align}
From \eqref{eq3.7}-\eqref{eq3.9}, we get
\begin{equation}\label{eq3.10}
  I_{2}\leq
  -\frac{5k\varepsilon\gamma}{8}\|\nabla\Delta^{2}\phi\|_{L^{2}}^{2}+C\big(\|\nabla\Delta\phi\|_{L^{2}}^{2}+1\big).
\end{equation}
Combining the above estimates \eqref{eq3.5} and \eqref{eq3.10}, we
obtain
\begin{equation}\label{eq3.11}
  \frac{d}{dt}\|\nabla\Delta\phi\|_{L^{2}}^{2}+k\varepsilon\gamma\|\nabla\Delta^{2}\phi\|_{L^{2}}^{2}\leq
  C\big(\|\nabla u\|_{L^{2}}^{2}+1\big)\big(\|\nabla\Delta\phi\|_{L^{2}}^{2}+1\big).
\end{equation}
The Gronwall's equality yields that
\begin{equation}\label{eq3.12}
  \|\nabla\Delta\phi(t)\|_{L^{2}}^{2}\leq
  \|\nabla\Delta\phi_{0}\|_{L^{2}}^{2}\exp\Big(C\int_{0}^{t}(\|\nabla u(\tau)\|_{L^{2}}^{2}+1)d\tau\Big).
\end{equation}
The estimate \eqref{eq3.12} with \eqref{eq3.2} imply \eqref{eq3.3}
immediately. We complete the proof of Lemma \ref{le3.1}.
\end{proof}
\medskip

By \eqref{eq3.1} and \eqref{eq3.3},  for any $0<T<\infty$, we obtain
\begin{equation}\label{eq3.13}
  \sup_{0\leq t\leq T}\|\phi(\cdot,t)\|_{H^{3}}\leq C.
\end{equation}
By the Sobolev embedding $H^{2}_{per}(Q)\hookrightarrow
L^{\infty}_{per}(Q)$, \eqref{eq3.13} yields that
\begin{equation}\label{eq3.14}
  \sup_{0\leq t\leq T}\|\nabla\phi\|_{L^{\infty}}\leq C.
\end{equation}
This result will be used  frequently in the proofs of Theorems
\ref{th1.1} and \ref{th1.2}.

\section{The proof of Theorem \ref{th1.1}}
We argue Theorem \ref{th1.1} by contradiction. Assume that the
result \eqref{eq1.20} is not true, which means that there exists a
constant $M>0$ such that
\begin{equation}\label{eq4.1}
\int_{0}^{T_{*}}\frac{\|\omega(\cdot,t)\|_{\dot{B}^{0}_{\infty,\infty}}}{\sqrt{1+\ln(e+\|\omega(\cdot,t)\|_{\dot{B}^{0}_{\infty,\infty}})}}dt\leq
M.
\end{equation} Under the condition \eqref{eq4.1}, if we can prove that
\begin{equation}\label{eq4.2}
  \limsup_{t\nearrow T_{*}}\big(\|u(\cdot,t)\|_{H^{3}}+\|\phi(\cdot,t)\|_{H^{6}}\big)\leq C
\end{equation}
holds for some constant $C$ depending only on $u_{0}$, $\phi_{0}$,
$M$, $T_{*}$ and coefficients of the system
\eqref{eq1.1}--\eqref{eq1.5}, then we can extend the solution
$(u,\phi)$ beyond the time $t=T_{*}$, which leads to the
contradiction. Therefore, it suffices to show that under the
condition \eqref{eq4.1}, we get \eqref{eq4.2}.

\medskip

Taking the curl on \eqref{eq1.1}, we obtain
\begin{equation}\label{eq4.3}
  \partial_{t}\omega-\mu\Delta
  \omega+u\cdot\nabla\omega=\omega\cdot\nabla
  u+\nabla\times(\frac{\delta E}{\delta\phi}\nabla\phi).
\end{equation}
Multiplying \eqref{eq4.3} by $\omega$ and integrating over $Q$, we
have
\begin{equation}\label{eq4.4}
  \frac{1}{2}\frac{d}{dt}\|\omega\|_{L^{2}}^{2}+\mu\|\nabla\omega\|_{L^{2}}^{2}=\int_{Q}w\cdot\nabla
  u\cdot\omega dx-\int_{Q}\frac{\delta
  E}{\delta\phi}\nabla\phi\cdot\nabla\times\omega dx,
\end{equation}
where we have used the fact $\int_{Q}u\cdot\nabla\omega\cdot\omega
dx=0$ due to $\nabla\cdot u=0$. Since the Riesz operators are
bounded in $L^{2}$ and $\nabla
u=(-\Delta)^{-1}\nabla(\nabla\times\omega)$, we have $\|\nabla
u\|_{L^{2}}\leq C\|\omega\|_{L^{2}}$. This implies that
\begin{equation}\label{eq4.5}
  \Big{|}\int_{Q}w\cdot\nabla u\cdot\omega dx\Big{|}\leq
  C\|\omega\|_{L^{\infty}}\|\nabla u\|_{L^{2}}\|\omega\|_{L^{2}}\leq
  C\|\omega\|_{L^{\infty}}\|\omega\|_{L^{2}}^{2}.
\end{equation}
Applying Young's inequality and \eqref{eq3.14}, we have
\begin{align}\label{eq4.6}
  \Big{|}\int_{Q}\frac{\delta
  E}{\delta\phi}\nabla\phi\cdot\nabla\times\omega dx\Big{|}&\leq
  \frac{\mu}{4}\|\nabla\omega\|_{L^{2}}^{2}+C\|\frac{\delta
  E}{\delta\phi}\nabla\phi\|_{L^{2}}^{2}\nonumber\\
  &\leq
  \frac{\mu}{4}\|\nabla\omega\|_{L^{2}}^{2}+C\|\frac{\delta
  E}{\delta\phi}\|_{L^{2}}^{2}\|\nabla\phi\|_{L^{\infty}}^{2}\nonumber\\
  &\leq
  \frac{\mu}{4}\|\nabla\omega\|_{L^{2}}^{2}+C\|\frac{\delta
  E}{\delta\phi}\|_{L^{2}}^{2}.
\end{align}
Taking \eqref{eq4.5} and \eqref{eq4.6} into \eqref{eq4.4}, we obtain
\begin{equation}\label{eq4.7}
  \frac{d}{dt}\|\omega\|_{L^{2}}^{2}+\frac{3\mu}{2}\|\nabla\omega\|_{L^{2}}^{2}\leq
  C\Big(\|\omega\|_{L^{\infty}}+1\Big)\Big(\|\omega\|_{L^{2}}^{2}+\|\frac{\delta
  E}{\delta\phi}\|_{L^{2}}^{2}\Big).
\end{equation}
\medskip

On the other hand, after integration by parts, we obtain from
\eqref{eq1.11} that
\begin{align}\label{eq4.8}
  \frac{1}{2}\frac{d}{dt}&\|\frac{\delta
  E}{\delta\phi}\|_{L^{2}}^{2}=\int_{Q}\frac{\partial}{\partial t}\frac{\delta
  E}{\delta\phi}\cdot\frac{\delta
  E}{\delta\phi}dx\nonumber\\
  &=\int_{Q}\frac{\partial}{\partial
  t}\big[kg(\phi)+M_{1}(A(\phi)-\alpha)+M_{2}(B(\phi)-\beta)f(\phi)\big]\cdot \frac{\delta
  E}{\delta\phi}dx\nonumber\\
  &=-k\int_{Q}\frac{\partial}{\partial
  t}\Delta f(\phi)\cdot\frac{\delta
  E}{\delta\phi}dx+\frac{k}{\varepsilon^{2}}\int_{Q}\frac{\partial}{\partial
  t}\big[(3\phi^{2}-1)f(\phi)\big]\cdot\frac{\delta
  E}{\delta\phi}dx+M_{1}\frac{d}{dt}A(\phi)\int_{Q}\frac{\delta
  E}{\delta\phi}dx\nonumber\\
  &\ \ \ +M_{2}\frac{d}{dt}B(\phi)\int_{Q}f(\phi)\cdot\frac{\delta
  E}{\delta\phi}dx+M_{2}(B(\phi)-\beta)\int_{Q}\frac{\partial}{\partial
  t}f(\phi)\cdot\frac{\delta
  E}{\delta\phi}dx\nonumber\\
  &:=J_{1}+J_{2}+J_{3}+J_{4}+J_{5}.
\end{align}
Noticing from \eqref{eq1.3} that
\begin{align*}
  \|\frac{\partial\phi}{\partial
  t}\|_{L^{2}}&\leq C\Big(\|u\cdot\nabla\phi\|_{L^{2}}+\|\frac{\delta
  E}{\delta\phi}\|_{L^{2}}\Big)\\&\leq C\Big(\|u\|_{L^{2}}\|\nabla\phi\|_{L^{\infty}}+\|\frac{\delta
  E}{\delta\phi}\|_{L^{2}}\Big)\\&\leq C\Big(\|\frac{\delta
  E}{\delta\phi}\|_{L^{2}}+1\Big),
\end{align*}
\begin{align*}
  \|\nabla\frac{\partial\phi}{\partial
  t}\|_{L^{2}}&\leq C\Big(\|\nabla u\cdot\nabla\phi\|_{L^{2}}+\|u\cdot\nabla^{2}\phi\|_{L^{2}}+\|\nabla\frac{\delta
  E}{\delta\phi}\|_{L^{2}}\Big)\\
  &\leq C\Big(\|\nabla u\|_{L^{2}}\|\nabla \phi\|_{L^{\infty}}+\|u\|_{L^{3}}\|\nabla^{2}\phi\|_{L^{6}}+\|\nabla\frac{\delta
  E}{\delta\phi}\|_{L^{2}}\Big)\\
  &\leq C\Big(\|\Delta\frac{\delta
  E}{\delta\phi}\|_{L^{2}}+\|\nabla u\|_{L^{2}}+\|\frac{\delta
  E}{\delta\phi}\|_{L^{2}}+1\Big),
\end{align*}
\begin{align*}
  \|\frac{\partial f(\phi)}{\partial
  t}\|_{L^{2}} &\leq C\Big(\|\Delta\frac{\partial\phi}{\partial t}\|_{L^{2}}+\|\frac{\partial}{\partial
  t}(\phi^{3}-\phi)\|_{L^{2}}\Big)\\
  &\leq C\Big(\|\Delta u\cdot\nabla\phi\|_{L^{2}}+\|\nabla u\cdot\nabla^{2}\phi\|_{L^{2}}+\|u\cdot\nabla\Delta\phi\|_{L^{2}}+\|\Delta\frac{\delta
  E}{\delta\phi}\|_{L^{2}}+\|\frac{\partial}{\partial
  t}(\phi^{3}-\phi)\|_{L^{2}}\Big)\\
  &\leq C\Big(\|\nabla\phi\|_{L^{\infty}}\|\Delta u\|_{L^{2}}
  +\|\nabla u\|_{L^{3}}\|\nabla^{2}\phi\|_{L^{6}}
  +\|u\|_{L^{\infty}}\|\nabla\Delta\phi\|_{L^{2}}+\|\Delta\frac{\delta
  E}{\delta\phi}\|_{L^{2}}+\|\frac{\partial\phi}{\partial
  t}\|_{L^{2}}\Big)\\
  &\leq C\Big(\|\Delta u\|_{L^{2}}
  +\|\Delta\frac{\delta
  E}{\delta\phi}\|_{L^{2}}+\|\frac{\delta
  E}{\delta\phi}\|_{L^{2}}+1\Big).
\end{align*}
Then we can estimate $J_{i}$ $(i=1, 2, 3, 4, 5)$  as follows: For
$J_{1}$, we can further split it into the following two terms:
\begin{align}\label{eq4.9}
  J_{1}&=k\varepsilon\int_{Q}\Delta\frac{\partial \phi}{\partial
  t}\cdot\Delta\frac{\delta
  E}{\delta\phi}dx-\frac{k}{\varepsilon}\int_{Q}\frac{\partial}{\partial
  t}\Delta(\phi^{3}-\phi)\cdot\frac{\delta
  E}{\delta\phi}dx :=J_{11}+J_{12}.
\end{align}
By using Leibniz's rule, \eqref{eq1.3} yields that
\begin{align}\label{eq4.10}
  J_{11}&=-k\varepsilon\gamma\|\Delta\frac{\delta
  E}{\delta\phi}\|_{L^{2}}^{2}-k\varepsilon\int_{Q}\Delta(u\cdot\nabla\phi)\cdot\Delta\frac{\delta
  E}{\delta\phi}dx\nonumber\\
  &\leq -\frac{9k\varepsilon\gamma}{10}\|\Delta\frac{\delta
  E}{\delta\phi}\|_{L^{2}}^{2}+C\|\Delta(u\cdot\nabla\phi)\|_{L^{2}}^{2}\nonumber\\
  &\leq -\frac{9k\varepsilon\gamma}{10}\|\Delta\frac{\delta
  E}{\delta\phi}\|_{L^{2}}^{2}+C\Big(\|\Delta u\cdot\nabla\phi\|_{L^{2}}^{2}+2\|\nabla u\cdot\nabla^{2}\phi\|_{L^{2}}^{2}
  +\|u\cdot\nabla\Delta\phi\|_{L^{2}}^{2}\Big)\nonumber\\
  &\leq -\frac{9k\varepsilon\gamma}{10}\|\Delta\frac{\delta
  E}{\delta\phi}\|_{L^{2}}^{2}+C\Big(\|\nabla\phi\|_{L^{\infty}}^{2}\|\Delta u\|_{L^{2}}^{2}
  +\|\nabla u\|_{L^{3}}^{2}\|\nabla^{2}\phi\|_{L^{6}}^{2}
  +\|u\|_{L^{\infty}}^{2}\|\nabla\Delta\phi\|_{L^{2}}^{2}\Big)\nonumber\\
  &\leq -\frac{9k\varepsilon\gamma}{10}\|\Delta\frac{\delta
  E}{\delta\phi}\|_{L^{2}}^{2}+C\|\Delta
  u\|_{L^{2}}^{2}+C(\|\nabla u\|_{L^{2}}^{2}+1),
\end{align}
\begin{align}\label{eq4.11}
  J_{12}&=-\frac{k}{\varepsilon}\int_{Q}\frac{\partial}{\partial
  t}(\phi^{3}-\phi)\cdot \Delta\frac{\delta
  E}{\delta\phi}dx\nonumber\\
  &\leq \frac{k\varepsilon\gamma}{10}\|\Delta\frac{\delta
  E}{\delta\phi}\|_{L^{2}}^{2}+\|\frac{\partial}{\partial
  t}(\phi^{3}-\phi)\|_{L^{2}}^{2}\nonumber\\
  &\leq \frac{k\varepsilon\gamma}{10}\|\Delta\frac{\delta
  E}{\delta\phi}\|_{L^{2}}^{2}+C\Big(\|\phi\|_{L^{\infty}}^{4}\|\frac{\partial\phi}{\partial
  t}\|_{L^{2}}^{2}+\|\frac{\partial\phi}{\partial
  t}\|_{L^{2}}^{2}\Big)\nonumber\\
  &\leq \frac{k\varepsilon\gamma}{10}\|\Delta\frac{\delta
  E}{\delta\phi}\|_{L^{2}}^{2}+C\Big(\|\frac{\delta
  E}{\delta\phi}\|_{L^{2}}^{2}+1\Big).
\end{align}
 Hence, we infer from
\eqref{eq4.10} and \eqref{eq4.11} that
\begin{align}\label{eq4.12}
  J_{1}\leq -\frac{4k\varepsilon\gamma}{5}\|\Delta\frac{\delta
  E}{\delta\phi}\|_{L^{2}}^{2}+C\|\Delta
  u\|_{L^{2}}^{2}+C\Big(\|\nabla u\|_{L^{2}}^{2}+\|\frac{\delta
  E}{\delta\phi}\|_{L^{2}}^{2}+1\Big).
\end{align}
Similarly, we can estimate $J_{2}$, $J_{3}$, $J_{4}$ and $J_{5}$ as
follows:
\begin{align}\label{eq4.13}
  J_{2}&=\frac{6k}{\varepsilon^{2}}\int_{Q}\phi f(\phi)\frac{\partial\phi}{\partial
  t}\cdot\frac{\delta
  E}{\delta\phi}dx+\frac{k}{\varepsilon^{2}}\int_{Q}(3\phi^{2}-1)\frac{\partial f(\phi)}{\partial
  t}\cdot\frac{\delta
  E}{\delta\phi}dx\nonumber\\
  &=-\frac{6k}{\varepsilon}\int_{Q}\phi \Delta\phi\frac{\partial\phi}{\partial
  t}\cdot\frac{\delta
  E}{\delta\phi}dx+\frac{6k}{\varepsilon^{3}}\int_{Q}\phi^{2} (\phi^{2}-1)\frac{\partial\phi}{\partial
  t}\cdot\frac{\delta
  E}{\delta\phi}dx+\frac{k}{\varepsilon^{2}}\int_{Q}(3\phi^{2}-1)\frac{\partial
  f(\phi)}{\partial
  t}\cdot\frac{\delta
  E}{\delta\phi}dx\nonumber\\
  &\leq C\Big(\|\phi\|_{L^{\infty}}\|\Delta\phi\|_{L^{6}}\|\frac{\partial\phi}{\partial
  t}\|_{L^{3}}
  +\|\phi\|_{L^{\infty}}^{2}\|\phi^{2}-1\|_{L^{\infty}}\|\frac{\partial\phi}{\partial t}\|_{L^{2}}
  +\|\phi^{2}-1\|_{L^{\infty}}\|\frac{\partial f(\phi)}{\partial
  t}\|_{L^{2}}\Big)\|\frac{\delta E}{\delta \phi}\|_{L^{2}}\nonumber\\
  &\leq C\Big(\|\frac{\partial\phi}{\partial
  t}\|_{L^{2}}+\|\nabla\frac{\partial\phi}{\partial
  t}\|_{L^{2}}+\|\frac{\partial f(\phi)}{\partial
  t}\|_{L^{2}}\Big)\|\frac{\delta E}{\delta \phi}\|_{L^{2}}\nonumber\\
  &\leq C\Big(\|\Delta u\|_{L^{2}}
  +\|\Delta\frac{\delta
  E}{\delta\phi}\|_{L^{2}}+\|\nabla u\|_{L^{2}}+\|\frac{\delta
  E}{\delta\phi}\|_{L^{2}}+1\Big)\|\frac{\delta E}{\delta
  \phi}\|_{L^{2}}\nonumber\\
  &\leq \frac{k\varepsilon\gamma}{10}\|\Delta\frac{\delta
  E}{\delta\phi}\|_{L^{2}}^{2}+ C\|\Delta u\|_{L^{2}}^{2}+C\Big(\|\nabla u\|_{L^{2}}^{2}+\|\frac{\delta
  E}{\delta\phi}\|_{L^{2}}^{2}+1\Big),
\end{align}
\begin{align}\label{eq4.14}
  J_{3}&\leq C\Big{|}\frac{d A(\phi)}{dt}\Big{|}\Big{|}\int_{Q}\frac{\delta E}{\delta \phi}dx\Big{|}
  \leq C\Big{|}\int_{Q}\frac{\partial \phi}{\partial t}dx\Big{|}\Big{|}\int_{Q}\frac{\delta E}{\delta \phi}dx\Big{|}\nonumber\\
  &\leq C\|\frac{\partial \phi}{\partial t}\|_{L^{2}}\|\frac{\delta E}{\delta \phi}\|_{L^{2}}\leq C\Big(\|\frac{\delta
  E}{\delta\phi}\|_{L^{2}}^{2}+1\Big),
\end{align}
\begin{align}\label{eq4.15}
  J_{4}&\leq C\Big{|}\frac{d B(\phi)}{dt}\Big{|}\|f(\phi)\|_{L^{2}}\|\frac{\delta
  E}{\delta\phi}\|_{L^{2}}\nonumber\\
  &\leq C\Big(\|\nabla\phi\|_{L^{2}}\|\nabla\frac{\partial \phi}{\partial t}\|_{L^{2}}
  +\|\phi^{3}-\phi\|_{L^{\infty}}\|\frac{\partial \phi}{\partial
  t}\|_{L^{2}}\Big)\|f(\phi)\|_{L^{2}}\|\frac{\delta
  E}{\delta\phi}\|_{L^{2}}\nonumber\\
  &\leq C\Big(\|\Delta\frac{\delta
  E}{\delta\phi}\|_{L^{2}}+ \|\nabla u\|_{L^{2}}+\|\frac{\delta
  E}{\delta\phi}\|_{L^{2}}+1\Big)\|\frac{\delta
  E}{\delta\phi}\|_{L^{2}}\nonumber\\
  &\leq \frac{k\varepsilon\gamma}{10}\|\Delta\frac{\delta
  E}{\delta\phi}\|_{L^{2}}^{2}+C\Big(\|\nabla u\|_{L^{2}}^{2}+\|\frac{\delta
  E}{\delta\phi}\|_{L^{2}}^{2}+1\Big),
\end{align}
\begin{align}\label{eq4.16}
  J_{5}&\leq C|B(\phi)-\beta|\|\frac{\partial f(\phi)}{\partial t}\|_{L^{2}}\|\frac{\delta
  E}{\delta\phi}\|_{L^{2}}\nonumber\\
  &\leq C\Big(\|\nabla\phi\|_{L^{2}}^{2}+\|\phi^{2}-1\|_{L^{2}}^{2}\Big)\|\frac{\partial f(\phi)}{\partial t}\|_{L^{2}}\|\frac{\delta
  E}{\delta\phi}\|_{L^{2}}\nonumber\\
  &\leq C\Big(\|\Delta u\|_{L^{2}}
  +\|\Delta\frac{\delta
  E}{\delta\phi}\|_{L^{2}}+\|\nabla u\|_{L^{2}}+\|\frac{\delta
  E}{\delta\phi}\|_{L^{2}}+1\Big)\|\frac{\delta E}{\delta
  \phi}\|_{L^{2}}\nonumber\\
  &\leq \frac{k\varepsilon\gamma}{10}\|\Delta\frac{\delta
  E}{\delta\phi}\|_{L^{2}}^{2}+C\|\Delta u\|_{L^{2}}^{2}+C\Big(\|\nabla u\|_{L^{2}}^{2}+\|\frac{\delta
  E}{\delta\phi}\|_{L^{2}}^{2}+1\Big).
\end{align}
Taking \eqref{eq4.12}--\eqref{eq4.16} into \eqref{eq4.8}, by using
the fact that $\|\Delta u\|_{L^{2}}\leq C\|\nabla w\|_{L^{2}}$, we
conclude that
\begin{align}\label{eq4.17}
  \frac{d}{dt}\|\frac{\delta
  E}{\delta\phi}\|_{L^{2}}^{2}+k\varepsilon\gamma\|\Delta\frac{\delta
  E}{\delta\phi}\|_{L^{2}}^{2}\leq \tilde{C}\|\nabla w\|_{L^{2}}^{2}+C\Big(\|w\|_{L^{2}}^{2}+\|\frac{\delta
  E}{\delta\phi}\|_{L^{2}}^{2}+1\Big),
\end{align}
where $\tilde{C}$ is a constant depending only on
$\|u_{0}\|_{H^{1}}$, $\|\phi_{0}\|_{H^{3}}$, $T_{*}$ and
coefficients of the system due to the estimate \eqref{eq2.3}.

\medskip

Set
$$
  \eta=\frac{\mu}{2\tilde{C}}.
$$
Then multiplying \eqref{eq4.17} by $\eta$, adding \eqref{eq4.7}
together, applying Lemma \ref{le2.2} with $s=3$, we obtain
\begin{align}\label{eq4.18}
  \frac{d}{dt}\Big(\|w\|_{L^{2}}^{2}&+\eta\|\frac{\delta
  E}{\delta\phi}\|_{L^{2}}^{2}\Big)+\mu\|\nabla w\|_{L^{2}}^{2}+k\varepsilon\gamma\eta\|\Delta\frac{\delta
  E}{\delta\phi}\|_{L^{2}}^{2}\nonumber\\
  &\leq
  C\Big(\|w\|_{L^{\infty}}+1\Big)\Big(\|w\|_{L^{2}}^{2}+\eta\|\frac{\delta
  E}{\delta\phi}\|_{L^{2}}^{2}+1\Big)\nonumber\\
  &\leq
  C\Big(1+\|w\|_{\dot{B}^{0}_{\infty,\infty}}\sqrt{1+\ln(e+\|w\|_{\dot{B}^{0}_{\infty,\infty}})}\Big)\Big(\|w\|_{L^{2}}^{2}+\eta\|\frac{\delta
  E}{\delta\phi}\|_{L^{2}}^{2}+1\Big)\nonumber\\
  &\leq
  C\Big(\|w\|_{L^{2}}^{2}+\eta\|\frac{\delta
  E}{\delta\phi}\|_{L^{2}}^{2}+1\Big)+C\frac{\|w\|_{\dot{B}^{0}_{\infty,\infty}}}{\sqrt{1+\ln(e+\|w\|_{\dot{B}^{0}_{\infty,\infty}})}}
  \sqrt{1+\ln(e+\|w\|_{\dot{B}^{0}_{\infty,\infty}})}\nonumber\\
  &\ \ \ \times\sqrt{\ln(e+\|w\|_{H^{2}})}\Big(\|w\|_{L^{2}}^{2}+\eta\|\frac{\delta
  E}{\delta\phi}\|_{L^{2}}^{2}+1\Big)\nonumber\\
  &\leq C\Big(\|w\|_{L^{2}}^{2}+\eta\|\frac{\delta
  E}{\delta\phi}\|_{L^{2}}^{2}+1\Big)\nonumber\\
  &\ \ \ +C\frac{\|w\|_{\dot{B}^{0}_{\infty,\infty}}}{\sqrt{1+\ln(e+\|w\|_{\dot{B}^{0}_{\infty,\infty}})}}
  \ln(e+\|\nabla^{3}u\|_{L^{2}})\Big(\|w\|_{L^{2}}^{2}+\eta\|\frac{\delta
  E}{\delta\phi}\|_{L^{2}}^{2}+1\Big),
\end{align}
where $C$ is a constant which may depend on $\eta$.

\medskip

By the condition \eqref{eq1.20}, one concludes that for any small
constant $\sigma>0$, there exists $T_{0}<T$ such that
\begin{equation}\label{eq4.19}
  \int_{T_{0}}^{T}\frac{\|\omega\|_{\dot{B}^{0}_{\infty,\infty}}}{\sqrt{1+\ln(e+\|\omega\|_{\dot{B}^{0}_{\infty,\infty}})}}dt<\sigma.
\end{equation}
For any $T_{0}<t\leq T$, we set
\begin{equation}\label{eq4.20}
  h(t):=\sup_{T_{0}\leq \tau\leq t}\Big(\|\nabla\Delta
  u(\tau)\|_{L^{2}}^{2}+\hat{\eta}\|\Delta\frac{\delta E}{\delta
  \phi}(\tau)\|_{L^{2}}^{2}\Big),
\end{equation}
where $\hat{\eta}$ is a determined constant which specified later.
Applying Gronwall's inequality to \eqref{eq4.18} in the time
interval $[T_{0},t]$, one has
\begin{align}\label{eq4.21}
  \|w(t)\|_{L^{2}}^{2}&+\eta\|\frac{\delta
  E}{\delta\phi}(t)\|_{L^{2}}^{2}\nonumber\\
  &\leq C_{0}\exp\Big(\int_{T_{0}}^{t}Cds+C\ln(e+h(t))
  \int_{T_{0}}^{t}\frac{\|\omega\|_{\dot{B}^{0}_{\infty,\infty}}}{\sqrt{1+\ln(e+\|\omega\|_{\dot{B}^{0}_{\infty,\infty}})}}d\tau\Big)\nonumber\\
  &\leq C_{0}\exp\big(C(t-T_{0})+C\sigma\ln(e+h(t))\big)\nonumber\\
  &\leq C_{0}(e+h(t))^{2C\sigma},
\end{align}
where $C_{0}=\|w(T_{0})\|_{L^{2}}^{2}+\eta\|\frac{\delta
E}{\delta\phi}(T_{0})\|_{L^{2}}^{2}$ is a positive constant
depending on $T_{0}$.

\medskip

Now we are in a position to derive higher order energy estimates of
the solution. Taking $\nabla\Delta$ on \eqref{eq1.1}, multiplying
$\nabla\Delta u$ and integrating over $Q$, we obtain
\begin{align}\label{eq4.22}
  \frac{1}{2}\frac{d}{dt}\|\nabla\Delta
  u\|_{L^{2}}^{2}+\mu\|\Delta^{2}u\|_{L^{2}}^{2}&=-\int_{Q}\nabla\Delta(u\cdot\nabla
  u)\cdot\nabla\Delta u dx+\int_{Q}\nabla\Delta(\frac{\delta E}{\delta
  \phi}\nabla\phi)\cdot\nabla\Delta u dx\nonumber\\
  &:=\tilde{I}_{1}+\tilde{I}_{2}.
\end{align}
Since $\nabla\cdot u=0$, $\tilde{I}_{1}$ can be rewritten as
\begin{equation*}
  \tilde{I}_{1}=-\int_{Q}\big[\nabla\Delta(u\cdot\nabla
  u)-u\cdot\nabla\nabla\Delta u\big]\cdot\nabla\Delta u dx.
\end{equation*}
By using Lemma \ref{le2.1}, we can estimate $\tilde{I}_{1}$ as
follows:
\begin{align}\label{eq4.23}
  \tilde{I}_{1}&\leq C\|\nabla\Delta(u\cdot\nabla
  u)-u\cdot\nabla\nabla\Delta u\|_{L^{4/3}}\|\nabla\Delta
  u\|_{L^{4}}\leq C\|\nabla u\|_{L^{2}}\|\nabla\Delta
  u\|_{L^{4}}^{2}\nonumber\\
  &\leq C\|\nabla u\|_{L^{2}}^{7/6}\|\Delta^{2}u\|_{L^{2}}^{11/6}\leq \frac{\mu}{8}\|\Delta^{2}u\|_{L^{2}}^{2}+C\|\nabla u\|_{L^{2}}^{14}\nonumber\\
  &\leq \frac{\mu}{8}\|\Delta^{2}u\|_{L^{2}}^{2}+C\|w\|_{L^{2}}^{14},
\end{align}
where we have used the Gagliardo-Nirenberg inequality:
\begin{equation*}
  \|\nabla\Delta u\|_{L^{4}}\leq C \|\nabla u\|_{L^{2}}^{1/12} \|\Delta^{2} u\|_{L^{2}}^{11/12}.
\end{equation*}
For $\tilde{I}_{2}$, after integration by parts, by using
\eqref{eq3.3} and \eqref{eq3.14}, we obtain
\begin{align}\label{eq4.24}
  \tilde{I}_{2}&=-\int_{Q}\Delta(\frac{\delta E}{\delta
  \phi}\nabla\phi)\Delta^{2}u dx\nonumber\\
  &\leq \frac{\mu}{8}\|\Delta^{2}u\|_{L^{2}}^{2}+C\|\Delta(\frac{\delta E}{\delta
  \phi}\nabla\phi)\|_{L^{2}}^{2}\nonumber\\
  &\leq \frac{\mu}{8}\|\Delta^{2}u\|_{L^{2}}^{2}+C\Big(\|\Delta\frac{\delta E}{\delta
  \phi}\nabla\phi\|_{L^{2}}^{2}+2\|\nabla\frac{\delta E}{\delta
  \phi}\nabla^{2}\phi\|_{L^{2}}^{2}+\|\frac{\delta E}{\delta
  \phi}\nabla\Delta\phi\|_{L^{2}}^{2}\Big)\nonumber\\
  &\leq \frac{\mu}{8}\|\Delta^{2}u\|_{L^{2}}^{2}+C\Big(\|\Delta\frac{\delta E}{\delta
  \phi}\|_{L^{2}}^{2}\|\nabla\phi\|_{L^{\infty}}^{2}+\|\nabla\frac{\delta E}{\delta
  \phi}\|_{L^{3}}^{2}\|\nabla^{2}\phi\|_{L^{6}}^{2}+\|\frac{\delta E}{\delta
  \phi}\|_{L^{\infty}}^{2}\|\nabla\Delta\phi\|_{L^{2}}^{2}\Big)\nonumber\\
  &\leq \frac{\mu}{8}\|\Delta^{2}u\|_{L^{2}}^{2}+C\Big(\|\Delta\frac{\delta E}{\delta
  \phi}\|_{L^{2}}^{2}+\|\frac{\delta E}{\delta
  \phi}\|_{L^{2}}^{2}+1\Big).
\end{align}
Combining \eqref{eq4.22}--\eqref{eq4.24}, we deduce that
\begin{align}\label{eq4.25}
  \frac{d}{dt}\|\nabla\Delta
  u\|_{L^{2}}^{2}+\frac{3\mu}{2}\|\Delta^{2}u\|_{L^{2}}^{2}\leq C\Big(\|w\|_{L^{2}}^{14}+\|\Delta\frac{\delta E}{\delta
  \phi}\|_{L^{2}}^{2}+\|\frac{\delta E}{\delta
  \phi}\|_{L^{2}}^{2}+1\Big).
\end{align}

To obtain the desired estimates for $\phi$, we start from
\eqref{eq1.11} that
\begin{align}\label{eq4.26}
 \frac{1}{2}\frac{d}{dt}\|\Delta \frac{\delta
  E}{\delta\phi}\|_{L^{2}}^{2}&=\int_{Q}\frac{\partial}{\partial t}\Delta \frac{\delta
  E}{\delta\phi}\cdot\Delta \frac{\delta
  E}{\delta\phi}dx\nonumber\\
  &=\int_{Q}\frac{\partial}{\partial
  t}\Delta\big[kg(\phi)+M_{1}(A(\phi)-\alpha)+M_{2}(B(\phi)-\beta)f(\phi)\big]\cdot\Delta \frac{\delta
  E}{\delta\phi}dx\nonumber\\
  &=\int_{Q}\frac{\partial}{\partial
  t}\Delta\big[kg(\phi)+M_{2}(B(\phi)-\beta)f(\phi)\big]\cdot\Delta \frac{\delta
  E}{\delta\phi}dx\nonumber\\
  &=\int_{Q}\frac{\partial}{\partial
  t}\big[kg(\phi)+M_{2}(B(\phi)-\beta)f(\phi)\big]\cdot\Delta^{2} \frac{\delta
  E}{\delta\phi}dx\nonumber\\
  &=-k\int_{Q}\frac{\partial}{\partial
  t}\Delta f(\phi)\cdot\Delta^{2} \frac{\delta
  E}{\delta\phi}dx+\frac{k}{\varepsilon^{2}}\int_{Q}\frac{\partial}{\partial
  t}\big[(3\phi^{2}-1)f(\phi)\big]\cdot\Delta^{2} \frac{\delta
  E}{\delta\phi}dx\nonumber\\
  &\ \ \ +M_{2}\frac{d}{dt}B(\phi)\int_{Q}f(\phi)\cdot\Delta^{2}\frac{\delta
  E}{\delta\phi}dx+M_{2}(B(\phi)-\beta)\int_{Q}\frac{\partial}{\partial
  t}f(\phi)\cdot\Delta^{2}\frac{\delta
  E}{\delta\phi}dx\nonumber\\
  &:=\tilde{J}_{1}+\tilde{J}_{2}+\tilde{J}_{3}+\tilde{J}_{4}.
\end{align}
Let us estimate the terms $\tilde{J}_{i}$ $(i=1,2,3,4)$ one by one.
For $\tilde{J}_{1}$, we divide it  into the following two parts:
\begin{align}\label{eq4.27}
  \tilde{J}_{1}&=k\varepsilon\int_{Q}\Delta^{2}\frac{\partial \phi}{\partial
  t}\cdot\Delta^{2} \frac{\delta
  E}{\delta\phi}dx-\frac{k}{\varepsilon}\int_{Q}\frac{\partial}{\partial
  t}\Delta(\phi^{3}-\phi)\cdot\Delta^{2} \frac{\delta
  E}{\delta\phi}dx :=\tilde{J}_{11}+\tilde{J}_{12}.
\end{align}
For $\tilde{J}_{11}$, by using Leibniz's rule, we deduce from \eqref{eq1.3}
that
\begin{align}\label{eq4.28}
  \tilde{J}_{11}&=-k\varepsilon\gamma\|\Delta^{2}\frac{\delta
  E}{\delta\phi}\|_{L^{2}}^{2}-k\varepsilon\int_{Q}\Delta^{2}(u\cdot\nabla\phi)\cdot\Delta^{2}\frac{\delta
  E}{\delta\phi}dx\nonumber\\
  &\leq -\frac{15k\varepsilon\gamma}{16}\|\Delta^{2}\frac{\delta
  E}{\delta\phi}\|_{L^{2}}^{2}+C\|\Delta^{2}(u\cdot\nabla\phi)\|_{L^{2}}^{2}\nonumber\\
  &\leq -\frac{15k\varepsilon\gamma}{16}\|\Delta^{2}\frac{\delta
  E}{\delta\phi}\|_{L^{2}}^{2}+C\Big(\|\Delta^{2}u\cdot\nabla\phi\|_{L^{2}}^{2}+4\|\nabla\Delta u\cdot\nabla^{2}\phi\|_{L^{2}}^{2}\nonumber\\
  &\ \ \ +6\|\Delta u\cdot\nabla\Delta\phi\|_{L^{2}}^{2}+4\|\nabla u\cdot\nabla^{2}\Delta\phi\|_{L^{2}}^{2}
  +\|u\cdot\nabla\Delta^{2}\phi\|_{L^{2}}^{2}\Big)\nonumber\\
  &\leq -\frac{15k\varepsilon\gamma}{16}\|\Delta^{2}\frac{\delta
  E}{\delta\phi}\|_{L^{2}}^{2}+C\Big(\|\nabla\phi\|_{L^{\infty}}^{2}\|\Delta^{2}u\|_{L^{2}}^{2}+\|\nabla\Delta
  u\|_{L^{3}}^{2}\|\nabla^{2}\phi\|_{L^{6}}^{2}\nonumber\\
  &\ \ \ +\|\Delta
  u\|_{L^{6}}^{2}\|\nabla\Delta\phi\|_{L^{3}}^{2}+\|\nabla
  u\|_{L^{\infty}}^{2}\|\Delta^{2}\phi\|_{L^{2}}^{2}+\|u\|_{L^{3}}^{2}\|\nabla^{5}\phi\|_{L^{6}}^{2}\Big)\nonumber\\
  &\leq -\frac{15k\varepsilon\gamma}{16}\|\Delta^{2}\frac{\delta
  E}{\delta\phi}\|_{L^{2}}^{2}+C\|\Delta^{2}u\|_{L^{2}}^{2}+C\Big[(\|\frac{\delta E}{\delta\phi}\|_{L^{2}}^{2}+1)(\|\nabla\Delta
  u\|_{L^{2}}^{2}+1)\nonumber\\
  &\ \ \ +(\|\nabla u\|_{L^{2}}^{2}+1)(\|\Delta\frac{\delta
  E}{\delta\phi}\|_{L^{2}}^{2}+1)\Big]\nonumber\\
  &\leq -\frac{15k\varepsilon\gamma}{16}\|\Delta^{2}\frac{\delta
  E}{\delta\phi}\|_{L^{2}}^{2}+C\|\Delta^{2}u\|_{L^{2}}^{2}\nonumber\\
  &\ \ \ +C\Big(\|\nabla u\|_{L^{2}}^{2}+\|\frac{\delta E}{\delta\phi}\|_{L^{2}}^{2}+1\Big)
  \Big(\|\nabla\Delta
  u\|_{L^{2}}^{2}+\|\Delta\frac{\delta
  E}{\delta\phi}\|_{L^{2}}^{2}+1\Big),
\end{align}
where we have used the facts $\|\Delta^{2}\phi\|_{L^{2}}^{2}\leq
C(\|\frac{\delta E}{\delta\phi}\|_{L^{2}}^{2}+1)$ and
$\|\nabla^{5}\phi\|_{L^{6}}^{2}\leq
C\|\nabla^{6}\phi\|_{L^{2}}^{2}\leq C(\|\Delta\frac{\delta
E}{\delta\phi}\|_{L^{2}}^{2}+1)$.  For $\tilde{J}_{12}$, it clear
that
\begin{align}\label{eq4.29}
  \tilde{J}_{12}&=-\frac{k}{\varepsilon}\int_{Q}\frac{\partial}{\partial
  t}\Delta(\phi^{3}-\phi)\cdot\Delta^{2} \frac{\delta
  E}{\delta\phi}dx
  =-\frac{6k}{\varepsilon}\int_{Q}\frac{\partial\big(|\nabla\phi|^{2}\phi\big)}{\partial
  t}\cdot\Delta^{2}\frac{\delta
  E}{\delta\phi}dx\nonumber\\&\ \ \ -\frac{3k}{\varepsilon}\int_{Q}\frac{\partial\big(\phi^{2}\Delta\phi\big)}{\partial
  t}\cdot\Delta^{2}\frac{\delta
  E}{\delta\phi}dx+\frac{k}{\varepsilon}\int_{Q}\Delta\frac{\partial\phi}{\partial
  t}\cdot\Delta^{2}\frac{\delta
  E}{\delta\phi}dx\nonumber\\
  &=-\frac{12k}{\varepsilon}\int_{Q}\phi\nabla\phi\nabla\frac{\partial \phi}{\partial t}\cdot\Delta^{2}\frac{\delta
  E}{\delta\phi}dx-\frac{6k}{\varepsilon}\int_{Q}|\nabla\phi|^{2}\frac{\partial\phi}{\partial
  t}\cdot\Delta^{2}\frac{\delta
  E}{\delta\phi}dx\nonumber\\
  &\ \ \ -\frac{6k}{\varepsilon}\int_{Q}\phi\Delta\phi\frac{\partial\phi}{\partial
  t}\cdot\Delta^{2}\frac{\delta
  E}{\delta\phi}dx-\frac{3k}{\varepsilon}\int_{Q}\phi^{2}\Delta\frac{\partial\phi}{\partial
  t}\cdot\Delta^{2}\frac{\delta
  E}{\delta\phi}dx\nonumber\\
  &\ \ \ +\frac{k}{\varepsilon}\int_{Q}\Delta\frac{\partial\phi}{\partial
  t}\cdot\Delta^{2}\frac{\delta
  E}{\delta\phi}dx:=\sum_{i=1}^{5}\tilde{J}_{12i}.
\end{align}
Similarly, we can estimate the terms $\tilde{J}_{12i}$
$(i=1,2,3,4,5)$ as follows:
\begin{align}\label{eq4.30}
  \tilde{J}_{121}&\leq \frac{k\varepsilon\gamma}{16}\|\Delta^{2}\frac{\delta
  E}{\delta\phi}\|_{L^{2}}^{2}+C\|\phi\nabla\phi\nabla\frac{\partial\phi}{\partial
  t}\|_{L^{2}}^{2}\nonumber\\
  &\leq \frac{k\varepsilon\gamma}{16}\|\Delta^{2}\frac{\delta
  E}{\delta\phi}\|_{L^{2}}^{2}+C\|\phi\|_{L^{\infty}}^{2}\|\nabla\phi\|_{L^{\infty}}^{2}\|\nabla\frac{\partial\phi}{\partial
  t}\|_{L^{2}}^{2}\nonumber\\
  &\leq \frac{k\varepsilon\gamma}{16}\|\Delta^{2}\frac{\delta
  E}{\delta\phi}\|_{L^{2}}^{2}+C\Big(\|\Delta\frac{\delta E}{\delta\phi}\|_{L^{2}}^{2}+\|\nabla
  u\|_{L^{2}}^{2} +\|\frac{\delta
  E}{\delta\phi}\|_{L^{2}}^{2}+1\Big),
\end{align}
\begin{align}\label{eq4.31}
  \tilde{J}_{122}&\leq \frac{k\varepsilon\gamma}{16}\|\Delta^{2}\frac{\delta
  E}{\delta\phi}\|_{L^{2}}^{2}+C\|\nabla\phi\|_{L^{\infty}}^{4}\|\frac{\partial\phi}{\partial
  t}\|_{L^{2}}^{2}\nonumber\\
  &\leq \frac{k\varepsilon\gamma}{16}\|\Delta^{2}\frac{\delta
  E}{\delta\phi}\|_{L^{2}}^{2}+C\Big(\|\frac{\delta
  E}{\delta\phi}\|_{L^{2}}^{2}+1\Big),
\end{align}
\begin{align}\label{eq4.32}
  \tilde{J}_{123}&\leq \frac{k\varepsilon\gamma}{16}\|\Delta^{2}\frac{\delta
  E}{\delta\phi}\|_{L^{2}}^{2}+C\|\phi\|_{L^{\infty}}^{2}\|\Delta\phi\|_{L^{6}}^{2}\|\frac{\partial\phi}{\partial
  t}\|_{L^{3}}^{2}\nonumber\\
  &\leq \frac{k\varepsilon\gamma}{16}\|\Delta^{2}\frac{\delta
  E}{\delta\phi}\|_{L^{2}}^{2}+C\Big(\|u\cdot\nabla\phi\|_{L^{3}}^{2}+\|\frac{\delta
  E}{\delta\phi}\|_{L^{3}}^{2}\Big)\nonumber\\
  &\leq \frac{k\varepsilon\gamma}{16}\|\Delta^{2}\frac{\delta
  E}{\delta\phi}\|_{L^{2}}^{2}+C\Big(\|u\|_{L^{6}}^{2}\|\nabla\phi\|_{L^{6}}^{2}+\|\Delta\frac{\delta
  E}{\delta\phi}\|_{L^{2}}^{2}+\|\frac{\delta
  E}{\delta\phi}\|_{L^{2}}^{2}\Big)\nonumber\\
  &\leq \frac{k\varepsilon\gamma}{16}\|\Delta^{2}\frac{\delta
  E}{\delta\phi}\|_{L^{2}}^{2}+C\Big(\|\nabla u\|_{L^{2}}^{2}+\|\Delta\frac{\delta
  E}{\delta\phi}\|_{L^{2}}^{2}+\|\frac{\delta
  E}{\delta\phi}\|_{L^{2}}^{2}\Big),
\end{align}
\begin{align}\label{eq4.33}
  \tilde{J}_{124}&\leq \frac{k\varepsilon\gamma}{16}\|\Delta^{2}\frac{\delta
  E}{\delta\phi}\|_{L^{2}}^{2}+C\|\phi\|_{L^{\infty}}^{2}\|\Delta\frac{\partial\phi}{\partial
  t}\|_{L^{2}}^{2}\nonumber\\
  &\leq \frac{k\varepsilon\gamma}{16}\|\Delta^{2}\frac{\delta
  E}{\delta\phi}\|_{L^{2}}^{2}+C\Big(\|\Delta(u\cdot\nabla\phi)\|_{L^{2}}^{2}+\|\Delta\frac{\delta
  E}{\delta\phi}\|_{L^{2}}^{2}\Big)\nonumber\\
  &\leq \frac{k\varepsilon\gamma}{16}\|\Delta^{2}\frac{\delta
  E}{\delta\phi}\|_{L^{2}}^{2}+C\Big(\|\nabla\phi\|_{L^{\infty}}^{2}\|\Delta u\|_{L^{2}}^{2}+\|\nabla
  u\|_{L^{3}}^{2}\|\nabla^{2}\phi\|_{L^{6}}^{2}\nonumber\\
  &\ \ \ +\|u\|_{L^{\infty}}^{2}\|\nabla\Delta\phi\|_{L^{2}}^{2}+
  \|\Delta\frac{\delta
  E}{\delta\phi}\|_{L^{2}}^{2}\Big)\nonumber\\
  &\leq \frac{k\varepsilon\gamma}{16}\|\Delta^{2}\frac{\delta
  E}{\delta\phi}\|_{L^{2}}^{2}+C\Big(\|\Delta u\|_{L^{2}}^{2}+
  \|\Delta\frac{\delta
  E}{\delta\phi}\|_{L^{2}}^{2}+1\Big),
\end{align}
\begin{align}\label{eq4.34}
  \tilde{J}_{125}&\leq \frac{k\varepsilon\gamma}{16}\|\Delta^{2}\frac{\delta
  E}{\delta\phi}\|_{L^{2}}^{2}+C\|\Delta\frac{\partial\phi}{\partial
  t}\|_{L^{2}}^{2}\nonumber\\
  &\leq \frac{k\varepsilon\gamma}{16}\|\Delta^{2}\frac{\delta
  E}{\delta\phi}\|_{L^{2}}^{2}+C\Big(\|\Delta u\|_{L^{2}}^{2}+
  \|\Delta\frac{\delta
  E}{\delta\phi}\|_{L^{2}}^{2}+1\Big).
\end{align}
Putting estimates \eqref{eq4.30}--\eqref{eq4.34} together, we obtain
from \eqref{eq4.29} that
\begin{align}\label{eq4.35}
  \tilde{J}_{12}&\leq \frac{5k\varepsilon\gamma}{16}\|\Delta^{2}\frac{\delta
  E}{\delta\phi}\|_{L^{2}}^{2}+C\Big(\|\Delta u\|_{L^{2}}^{2}+
  \|\Delta\frac{\delta
  E}{\delta\phi}\|_{L^{2}}^{2}+\|\frac{\delta
  E}{\delta\phi}\|_{L^{2}}^{2}+1\Big).
\end{align}
Since $\|\Delta u\|_{L^{2}}^{2}\leq C(\|\nabla\Delta
u\|_{L^{2}}^{2}+1)$, we obtain from \eqref{eq4.35}, \eqref{eq4.27}
and \eqref{eq4.28} that
\begin{align}\label{eq4.36}
  \tilde{J}_{1}&\leq -\frac{5k\varepsilon\gamma}{8}\|\Delta^{2}\frac{\delta
  E}{\delta\phi}\|_{L^{2}}^{2}+C\|\Delta^{2}u\|_{L^{2}}^{2}\nonumber\\
  &\ \ \ +C\Big(\|\nabla u\|_{L^{2}}^{2}+\|\frac{\delta E}{\delta\phi}\|_{L^{2}}^{2}+1\Big)
  \Big(\|\nabla\Delta u\|_{L^{2}}^{2}+
  \|\Delta\frac{\delta
  E}{\delta\phi}\|_{L^{2}}^{2}+1\Big).
\end{align}
The estimates of $\tilde{J}_{i}$ $(i=2,3,4)$ can be proceeded as
that of $J_{i}$ $(i=2,3,4,5)$, thus we have
\begin{align}\label{eq4.37}
  \tilde{J}_{2}&=\frac{6k}{\varepsilon^{2}}\int_{Q}\phi f(\phi)\frac{\partial\phi}{\partial
  t}\cdot\Delta^{2} \frac{\delta
  E}{\delta\phi}dx+\frac{k}{\varepsilon^{2}}\int_{Q}(3\phi^{2}-1)\frac{\partial f(\phi)}{\partial
  t}\cdot\Delta^{2} \frac{\delta
  E}{\delta\phi}dx\nonumber\\
  &=-\frac{6k}{\varepsilon}\int_{Q}\phi \Delta\phi\frac{\partial\phi}{\partial
  t}\cdot\Delta^{2} \frac{\delta
  E}{\delta\phi}dx+\frac{6k}{\varepsilon^{3}}\int_{Q}\phi^{2} (\phi^{2}-1)\frac{\partial\phi}{\partial
  t}\cdot\Delta^{2} \frac{\delta
  E}{\delta\phi}dx\nonumber\\
  &\ \ \ +\frac{k}{\varepsilon^{2}}\int_{Q}(3\phi^{2}-1)\frac{\partial f(\phi)}{\partial
  t}\cdot\Delta^{2} \frac{\delta
  E}{\delta\phi}dx\nonumber\\
  &\leq \frac{k\varepsilon\gamma}{8}\|\Delta^{2}\frac{\delta
  E}{\delta\phi}\|_{L^{2}}^{2}+C\Big(\|\phi\|_{L^{\infty}}^{2}\|\Delta\phi\|_{L^{6}}^{2}\|\frac{\partial\phi}{\partial
  t}\|_{L^{3}}^{2}
  +\|\phi\|_{L^{\infty}}^{4}\|\phi^{2}-1\|_{L^{\infty}}^{2}\|\frac{\partial\phi}{\partial t}\|_{L^{2}}^{2}\nonumber\\
  &\ \ \ +\|3\phi^{2}-1\|_{L^{\infty}}^{2}\|\frac{\partial f(\phi)}{\partial
  t}\|_{L^{2}}^{2}\Big)\nonumber\\
  &\leq \frac{k\varepsilon\gamma}{8}\|\Delta^{2}\frac{\delta
  E}{\delta\phi}\|_{L^{2}}^{2}+C\Big(\|\frac{\partial\phi}{\partial
  t}\|_{L^{3}}^{2}+\|\Delta\frac{\partial \phi}{\partial
  t}\|_{L^{2}}^{2}+\|\frac{\partial\phi}{\partial t}\|_{L^{2}}^{2}\Big)\nonumber\\
  &\leq \frac{k\varepsilon\gamma}{8}\|\Delta^{2}\frac{\delta
  E}{\delta\phi}\|_{L^{2}}^{2}+C\Big(\|\nabla\Delta u\|_{L^{2}}^{2}+\|\Delta\frac{\delta E}{\delta \phi}\|_{L^{2}}^{2}
  +\|\frac{\delta E}{\delta \phi}\|_{L^{2}}^{2}+1\Big),
\end{align}
\begin{align}\label{eq4.38}
  \tilde{J}_{3}&\leq C|\frac{d B(\phi)}{dt}|\|f(\phi)\|_{L^{2}}\|\Delta^{2}\frac{\delta
  E}{\delta\phi}\|_{L^{2}}\nonumber\\
  &\leq \frac{k\varepsilon\gamma}{8}\|\Delta^{2}\frac{\delta
  E}{\delta\phi}\|_{L^{2}}^{2}+C\Big(\|\nabla\phi\|_{L^{2}}\|\nabla\frac{\partial \phi}{\partial t}\|_{L^{2}}
  +\|\phi^{3}-\phi\|_{L^{2}}\|\frac{\partial \phi}{\partial
  t}\|_{L^{2}}\Big)^{2}\|f(\phi)\|_{L^{2}}^{2}\nonumber\\
  &\leq \frac{k\varepsilon\gamma}{8}\|\Delta^{2}\frac{\delta
  E}{\delta\phi}\|_{L^{2}}^{2}+C\Big(\|\nabla u\|_{L^{2}}^{2}+\|\Delta\frac{\delta E}{\delta \phi}\|_{L^{2}}^{2}
  +\|\frac{\delta E}{\delta \phi}\|_{L^{2}}^{2}+1\Big),
\end{align}
\begin{align}\label{eq4.39}
  \tilde{J}_{4}&\leq C|B(\phi)-\beta|\|\frac{\partial f(\phi)}{\partial t}\|_{L^{2}}\|\Delta^{2}\frac{\delta
  E}{\delta\phi}\|_{L^{2}}\nonumber\\
  &\leq \frac{k\varepsilon\gamma}{8}\|\Delta^{2}\frac{\delta
  E}{\delta\phi}\|_{L^{2}}^{2}+C\Big(\|\nabla\phi\|_{L^{2}}^{2}+\|\phi^{2}-1\|_{L^{2}}^{2}\Big)^{2}\|\frac{\partial f(\phi)}{\partial t}\|_{L^{2}}^{2}\nonumber\\
  &\leq \frac{k\varepsilon\gamma}{8}\|\Delta^{2}\frac{\delta
  E}{\delta\phi}\|_{L^{2}}^{2}+C\Big(\|\Delta\frac{\partial \phi}{\partial t}\|_{L^{2}}^{2}+\|\frac{\partial \phi}{\partial t}\|_{L^{2}}^{2}\Big)\nonumber\\
  &\leq \frac{k\varepsilon\gamma}{8}\|\Delta^{2}\frac{\delta
  E}{\delta\phi}\|_{L^{2}}^{2}+C\Big(\|\nabla\Delta u\|_{L^{2}}^{2}+\|\Delta\frac{\delta E}{\delta \phi}\|_{L^{2}}^{2}
  +\|\frac{\delta E}{\delta \phi}\|_{L^{2}}^{2}+1\Big).
\end{align}
Taking \eqref{eq4.36}--\eqref{eq4.39} into \eqref{eq4.26}, we
conclude that
\begin{align}\label{eq4.40}
  \frac{d}{dt}\|\Delta \frac{\delta
  E}{\delta\phi}\|_{L^{2}}^{2}&+\frac{k\varepsilon\gamma}{2}\|\Delta^{2}\frac{\delta
  E}{\delta\phi}\|_{L^{2}}^{2}\leq \hat{C}\|\Delta^{2}u\|_{L^{2}}^{2}\nonumber\\
  &\ \ \ +C\Big(\|\nabla u\|_{L^{2}}^{2}+\|\frac{\delta E}{\delta\phi}\|_{L^{2}}^{2}+1\Big)
  \Big(\|\nabla\Delta u\|_{L^{2}}^{2}+\|\Delta\frac{\delta
  E}{\delta\phi}\|_{L^{2}}^{2}+1\Big),
\end{align}
where $\hat{C}$ is a constant depending only on $\|u_{0}\|_{H^{1}}$,
$\|\phi_{0}\|_{H^{3}}$, $T_{*}$ and coefficients of the system due
to the estimate \eqref{eq3.3}.

Set
$$
  \hat{\eta}=\frac{\mu}{2\hat{C}}.
$$
Multiplying \eqref{eq4.40} by $\hat{\eta}$ and adding the resultant
to \eqref{eq4.25}, we obtain
\begin{align}\label{eq4.41}
  \frac{d}{dt}\Big(\|\nabla\Delta
  u\|_{L^{2}}^{2}&+\hat{\eta}\|\Delta \frac{\delta
  E}{\delta\phi}\|_{L^{2}}^{2}\Big)+\mu\|\Delta^{2}u\|_{L^{2}}^{2}+\frac{k\varepsilon\gamma\hat{\eta}}{2}\|\Delta^{2}\frac{\delta
  E}{\delta\phi}\|_{L^{2}}^{2}\nonumber\\
  &\leq C\Big(\|\nabla u\|_{L^{2}}^{2}+\|\frac{\delta E}{\delta\phi}\|_{L^{2}}^{2}+1\Big)
  \Big(\|w\|_{L^{2}}^{14}+\|\nabla \Delta u\|_{L^{2}}^{2}+\hat{\eta}\|\Delta\frac{\delta
  E}{\delta\phi}\|_{L^{2}}^{2}+e\Big).
\end{align}
It follows from \eqref{eq4.21} that
\begin{align}\label{eq4.42}
  \frac{d}{dt}(e+h(t))\leq C\Big(\|\nabla u\|_{L^{2}}^{2}+\|\frac{\delta E}{\delta\phi}\|_{L^{2}}^{2}+1\Big)
  \Big((e+h(t))^{14C\sigma}+h(t)+e\Big).
\end{align}
Choosing $\sigma$ small enough such that $14C\sigma\leq 1$, we get
\begin{align}\label{eq4.43}
  \frac{d}{dt}(e+h(t))\leq C\Big(\|\nabla u\|_{L^{2}}^{2}+\|\frac{\delta E}{\delta\phi}\|_{L^{2}}^{2}+1\Big)
  (e+h(t)).
\end{align}
Applying Gronwall's inequality leads to
\begin{align}\label{eq4.44}
  h(t)\leq (e+h(T_{0}))\exp\Big(C\int_{T_{0}}^{t}\big(\|\nabla u\|_{L^{2}}^{2}+\|\frac{\delta E}{\delta\phi}\|_{L^{2}}^{2}+1\big)d\tau
  \Big).
\end{align}
This combines with the basic energy \eqref{eq1.12} yield the
boundness of $h(t)$ on the time interval $[T_{0},T]$. Since it is
easy to verify that
\begin{equation*}
  \|\phi\|_{H^{6}}^{2}\leq C(\|\Delta\frac{\delta
  E}{\delta\phi}\|_{L^{2}}^{2}+1),
\end{equation*}
we finally obtain from \eqref{eq4.41} that
$$
  \sup_{T_{0}\leq t\leq T}\big(\|u(\cdot,t)\|_{H^{3}}+\|\phi(\cdot,t)\|_{H^{6}}\big)\leq
  C.
$$
This completes the proof of Theorem \ref{th1.1}.

\section{The proof of Theorem \ref{th1.2}}
Similarly we prove Theorem \ref{th1.2} by contradiction. It suffices
to prove that if
\begin{equation}\label{eq5.1}
\int_{0}^{T_{*}}\frac{\|\omega(\cdot,t)\|_{\dot{B}^{-1}_{\infty,\infty}}^{2}}{1+\ln(e+\|\omega(\cdot,t)\|_{\dot{B}^{-1}_{\infty,\infty}})}dt\leq
M<\infty,
\end{equation}
then
\begin{equation}\label{eq5.2}
  \limsup_{t\nearrow T_{*}}\big(\|u(\cdot,t)\|_{H^{3}}+\|\phi(\cdot,t)\|_{H^{6}}\big)\leq C
\end{equation}
for some constant depending only on $u_{0}$, $\phi_{0}$, $M$,
$T_{*}$ and coefficients of the system \eqref{eq1.1}--\eqref{eq1.5}.

\medskip

Multiplying \eqref{eq4.3} by $\omega$ and integrating over $Q$, we
have
\begin{equation}\label{eq5.3}
  \frac{1}{2}\frac{d}{dt}\|\omega\|_{L^{2}}^{2}+\mu\|\nabla\omega\|_{L^{2}}^{2}=\int_{Q}w\cdot\nabla
  u\cdot\omega dx-\int_{Q}\frac{\delta
  E}{\delta\phi}\nabla\phi\cdot\nabla\times\omega dx,
\end{equation}
Since $\|\nabla u\|_{L^{2}}\leq C\|\omega\|_{L^{2}}$, by using Lemma
\ref{le2.3}, we obtain
\begin{align}\label{eq5.4}
  \Big{|}\int_{Q}w\cdot\nabla u\cdot\omega dx\Big{|}&\leq
  C\|\omega\|_{L^{4}}^{2}\|\nabla u\|_{L^{2}}\leq
  C\|\omega\|_{L^{4}}^{2}\|\omega\|_{L^{2}}\nonumber\\
  &\leq C\|w\|_{\dot{B}^{-1}_{\infty,\infty}}\|w\|_{L^{2}}\|\nabla w\|_{L^{2}}\nonumber\\
  &\leq \frac{\mu}{8}\|\nabla w\|_{L^{2}}^{2}+C\|w\|_{\dot{B}^{-1}_{\infty,\infty}}^{2}\|w\|_{L^{2}}^{2}.
\end{align}
The second term on the right-hand side of \eqref{eq5.3} can be
estimated the same as \eqref{eq4.6}:
\begin{equation}\label{eq5.5}
  \Big{|}-\int_{Q}\frac{\delta
  E}{\delta\phi}\nabla\phi\cdot\nabla\times w dx\Big{|}\leq
  \frac{\mu}{8}\|\nabla w\|_{L^{2}}^{2}+C\|\frac{\delta
  E}{\delta\phi}\|_{L^{2}}^{2}.
\end{equation}
Taking \eqref{eq5.4} and \eqref{eq5.5} into \eqref{eq5.3}, we obtain
\begin{equation}\label{eq5.6}
  \frac{d}{dt}\|w\|_{L^{2}}^{2}+\frac{3\mu}{2}\|\nabla w\|_{L^{2}}^{2}\leq
  C\Big(\|w\|_{\dot{B}^{-1}_{\infty,\infty}}^{2}+1\Big)\Big(\|w\|_{L^{2}}^{2}+\|\frac{\delta
  E}{\delta\phi}\|_{L^{2}}^{2}\Big).
\end{equation}
The estimate for $\frac{\delta
  E}{\delta\phi}$ can be proceeded the same as that in the proof of Theorem \ref{th1.1}, thus we also get \eqref{eq4.17}. Multiplying  \eqref{eq4.17} by $\eta$ and adding
\eqref{eq5.6} together, we obtain
\begin{align}\label{eq5.7}
  \frac{d}{dt}\Big(\|w\|_{L^{2}}^{2}&+\eta\|\frac{\delta
  E}{\delta\phi}\|_{L^{2}}^{2}\Big)+\mu\|\nabla w\|_{L^{2}}^{2}+k\varepsilon\gamma\eta\|\Delta\frac{\delta
  E}{\delta\phi}\|_{L^{2}}^{2}\nonumber\\
  &\leq
  C\Big(\|w\|_{\dot{B}^{-1}_{\infty,\infty}}^{2}+1\Big)\Big(\|w\|_{L^{2}}^{2}+\eta\|\frac{\delta
  E}{\delta\phi}\|_{L^{2}}^{2}+1\Big)\nonumber\\
  &\leq
  C\Big(\|\omega\|_{\dot{B}^{-1}_{\infty,\infty}}^{2}+1\Big)\Big(\|w\|_{L^{2}}^{2}+\eta\|\frac{\delta
  E}{\delta\phi}\|_{L^{2}}^{2}+1\Big)\nonumber\\
  &= C\frac{\|\omega\|_{\dot{B}^{-1}_{\infty,\infty}}^{2}+1}{1+\ln(e+\|\omega\|_{\dot{B}^{-1}_{\infty,\infty}})}
  \Big(\|w\|_{L^{2}}^{2}+\eta\|\frac{\delta
  E}{\delta\phi}\|_{L^{2}}^{2}+1\Big)\Big[1+\ln(e+\|\nabla\Delta u\|_{L^{2}})\Big]\nonumber\\
  &\leq C\frac{\|\omega\|_{\dot{B}^{-1}_{\infty,\infty}}^{2}+1}{1+\ln(e+\|\omega\|_{\dot{B}^{-1}_{\infty,\infty}})}\Big(\|w\|_{L^{2}}^{2}
  +\eta\|\frac{\delta
  E}{\delta\phi}\|_{L^{2}}^{2}+1\Big)\Big[1+\ln(e+\|\nabla\Delta u\|_{L^{2}})\Big],
\end{align}
where $C$ is a constant which may depend on $\eta$.

\medskip

By the condition \eqref{eq1.21}, one concludes that for any small
constant $\sigma>0$, there exists $T_{0}<T$ such that
\begin{equation}\label{eq5.8}
  \int_{T_{0}}^{T}\frac{\|\omega\|_{\dot{B}^{-1}_{\infty,\infty}}^{2}+1}{1+\ln(e+\|\omega\|_{\dot{B}^{-1}_{\infty,\infty}})}dt<\sigma.
\end{equation}
For any $T_{0}<t\leq T$, we set
\begin{equation}\label{eq5.9}
  h(t):=\sup_{T_{0}\leq \tau\leq t}\Big(\|\nabla\Delta
  u(\tau)\|_{L^{2}}^{2}+\hat{\eta}\|\Delta\frac{\delta E}{\delta
  \phi}(\tau)\|_{L^{2}}^{2}\Big),
\end{equation}
where $\hat{\eta}$ is a determined constant which specified later.
Applying Gronwall's inequality to \eqref{eq5.7} in the time interval
$[T_{0},t]$, one has
\begin{align}\label{eq5.10}
  \|\nabla u(t)\|_{L^{2}}^{2}&+\eta\|\frac{\delta
  E}{\delta\phi}(t)\|_{L^{2}}^{2}\nonumber\\
  &\leq C_{0}\exp\Big(C(1+\ln(e+h(t)))
  \int_{T_{0}}^{t}\frac{\|\omega\|_{\dot{B}^{-1}_{\infty,\infty}}^{2}+1}{1+\ln(e+\|\omega\|_{\dot{B}^{-1}_{\infty,\infty}})}d\tau\Big)\nonumber\\
  &\leq C_{0}\exp\big(C\sigma(1+\ln(e+h(t)))\big)\nonumber\\
  &\leq C_{0}(e+h(t))^{2C\sigma},
\end{align}
where $C_{0}=\|\nabla u(T_{0})\|_{L^{2}}^{2}+\eta\|\frac{\delta
E}{\delta\phi}(T_{0})\|_{L^{2}}^{2}$ is a positive constant
depending on $T_{0}$.

\medskip

The derivations of higher derivative estimates are analogously the
proof of Theorem \ref{th1.1}, thus we safely omit it. This completes
the proof of Theorem \ref{th1.2}.
\medskip

\end{document}